\author{Gus Wiseman\footnote{Research supported in part by NSF VIGRE Grant No. DMS-0135345.}\\
\small Department of Mathematics,
\small University of California,
\small One Shields Ave.,
\small Davis, CA 95616\\
\small Email: \texttt{gus@math.ucdavis.edu}}
\date{Version of September 3, 2007}
\title{Enumeration of paths and cycles and $e$-coefficients of incomparability graphs}
\theoremstyle{plain}
\newtheorem{thm}{Theorem}[section]
\newtheorem{prop}[thm]{Proposition}
\newtheorem{lem}[thm]{Lemma}
\newtheorem{cor}[thm]{Corollary}
\theoremstyle{definition}
\newtheorem{defn}[thm]{Definition}
\theoremstyle{remark}
\newtheorem{ex}[thm]{Example}
\newtheorem{note}[thm]{Note}
\begin{document}

\bibliographystyle{hamsplain}
\maketitle

\begin{abstract}
We prove that the number of Hamiltonian paths on the complement of an acyclic digraph is equal to the number of cycle covers. As an application, we obtain a new expansion of the chromatic symmetric function of incomparability graphs in terms of elementary symmetric functions. Analysis of some of the combinatorial implications of this expansion leads to three bijections involving acyclic orientations.
\end{abstract}

\section{Introduction}

\begin{figure}
\psset{unit=0.625cm}
\begin{pspicture}(-0.5,-4.5)(20.5,1.5)
\psset{dotsize=0.18}
\psset{linewidth=0.05}

\psdots(0.0,0.0)
\psdots(0.0,1.0)
\psdots(1.0,0.0)
\psdots(1.0,1.0)

\psset{arrowscale=1.5 0.7}

\psset{arrowscale=1.2 1.6}
\psline{->}(0.0,0.0)(1.0,0.0)
\psline{->}(0.0,0.0)(0.0,1.0)
\psline{->}(0.0,1.0)(1.0,1.0)
\psline{->}(1.0,0.0)(0.0,1.0)

\uput{0.18}[45](1,1){\small{4}}
\uput{0.18}[-45](1,0){\small{2}}
\uput{0.18}[135](0,1){\small{3}}
\uput{0.18}[-135](0,0){\small{1}}

\psset{origin={-2.5,0.0}}

\psdots(0.0,0.0)
\psdots(0.0,1.0)
\psdots(1.0,0.0)
\psdots(1.0,1.0)

\psset{arrowscale=1.5 0.7}
\psarc{->}(-0.127,-0.127){0.18}{45}{405}
\psarc{->}(-0.127,1.127){0.18}{-45}{315}
\psarc{->}(1.127,1.127){0.18}{225}{585}
\psarc{->}(1.127,-0.127){0.18}{135}{495}

\psset{arrowscale=1.2 1.6}
\psline{<->}(0.0,0.0)(1.0,1.0)
\psline{<->}(1.0,0.0)(1.0,1.0)
\psline{->}(0.0,1.0)(0.0,0.0)
\psline{->}(0.0,1.0)(1.0,0.0)
\psline{->}(1.0,1.0)(1.0,0.0)
\psline{->}(1.0,0.0)(0.0,0.0)
\psline{->}(1.0,1.0)(0.0,1.0)

\psset{origin={-0.0,2.0}}

\psdots(0.0,0.0)
\psdots(0.0,1.0)
\psdots(1.0,0.0)
\psdots(1.0,1.0)

\psset{arrowscale=1.2 1.6}
\psline{<-}(0.0,0.0)(1.0,1.0)
\psline{->}(1.0,0.0)(1.0,1.0)
\psline{->}(0.0,1.0)(1.0,0.0)

\psset{origin={-2.5,2.0}}

\psdots(0.0,0.0)
\psdots(0.0,1.0)
\psdots(1.0,0.0)
\psdots(1.0,1.0)

\psset{arrowscale=1.2 1.6}
\psline{->}(0.0,0.0)(1.0,1.0)
\psline{->}(0.0,1.0)(1.0,0.0)
\psline{->}(1.0,0.0)(0.0,0.0)

\psset{origin={-5.0,2.0}}

\psdots(0.0,0.0)
\psdots(0.0,1.0)
\psdots(1.0,0.0)
\psdots(1.0,1.0)

\psset{arrowscale=1.2 1.6}

\psline{->}(0.0,0.0)(1.0,1.0)
\psline{->}(0.0,1.0)(0.0,0.0)
\psline{->}(1.0,1.0)(1.0,0.0)

\psset{origin={-7.5,2.0}}

\psdots(0.0,0.0)
\psdots(0.0,1.0)
\psdots(1.0,0.0)
\psdots(1.0,1.0)

\psset{arrowscale=1.2 1.6}
\psline{->}(0.0,1.0)(1.0,0.0)
\psline{->}(1.0,0.0)(0.0,0.0)
\psline{->}(1.0,1.0)(0.0,1.0)

\psset{origin={-10.0,2.0}}

\psdots(0.0,0.0)
\psdots(0.0,1.0)
\psdots(1.0,0.0)
\psdots(1.0,1.0)

\psset{arrowscale=1.2 1.6}
\psline{->}(1.0,0.0)(1.0,1.0)
\psline{->}(0.0,1.0)(0.0,0.0)
\psline{->}(1.0,1.0)(0.0,1.0)

\psset{origin={-12.5,2.0}}

\psdots(0.0,0.0)
\psdots(0.0,1.0)
\psdots(1.0,0.0)
\psdots(1.0,1.0)

\psset{arrowscale=1.2 1.6}
\psline{->}(0.0,0.0)(1.0,1.0)
\psline{->}(1.0,0.0)(0.0,0.0)
\psline{->}(1.0,1.0)(0.0,1.0)

\psset{origin={-15.0,2.0}}

\psdots(0.0,0.0)
\psdots(0.0,1.0)
\psdots(1.0,0.0)
\psdots(1.0,1.0)

\psset{arrowscale=1.2 1.6}
\psline{->}(0.0,0.0)(1.0,1.0)
\psline{->}(0.0,1.0)(1.0,0.0)
\psline{->}(1.0,1.0)(0.0,1.0)

\psset{origin={-0.0,4.0}}

\psdots(0.0,0.0)
\psdots(0.0,1.0)
\psdots(1.0,0.0)
\psdots(1.0,1.0)

\psset{arrowscale=1.5 0.7}
\psarc{->}(-0.127,-0.127){0.18}{45}{405}

\psset{arrowscale=1.2 1.6}
\psline{->}(1.0,1.0)(0.0,1.0)
\psline{->}(0.0,1.0)(1.0,0.0)
\psline{->}(1.0,0.0)(1.0,1.0)

\psset{origin={-2.5,4.0}}

\psdots(0.0,0.0)
\psdots(0.0,1.0)
\psdots(1.0,0.0)
\psdots(1.0,1.0)

\psset{arrowscale=1.5 0.7}

\psset{arrowscale=1.2 1.6}
\psline{->}(0.0,0.0)(1.0,1.0)
\psline{->}(1.0,1.0)(0.0,1.0)
\psline{->}(0.0,1.0)(1.0,0.0)
\psline{->}(1.0,0.0)(0.0,0.0)

\psset{origin={-5.0,4.0}}

\psdots(0.0,0.0)
\psdots(0.0,1.0)
\psdots(1.0,0.0)
\psdots(1.0,1.0)

\psset{arrowscale=1.5 0.7}
\psarc{->}(1.127,-0.127){0.18}{135}{495}

\psset{arrowscale=1.2 1.6}
\psline{->}(0.0,0.0)(1.0,1.0)
\psline{->}(1.0,1.0)(0.0,1.0)
\psline{->}(0.0,1.0)(0.0,0.0)

\psset{origin={-7.5,4.0}}

\psdots(0.0,0.0)
\psdots(0.0,1.0)
\psdots(1.0,0.0)
\psdots(1.0,1.0)

\psset{arrowscale=1.5 0.7}
\psarc{->}(-0.127,-0.127){0.18}{45}{405}
\psarc{->}(-0.127,1.127){0.18}{-45}{315}
\psarc{->}(1.127,1.127){0.18}{225}{585}
\psarc{->}(1.127,-0.127){0.18}{135}{495}

\psset{arrowscale=1.2 1.6}

\psset{origin={-10.0,4.0}}

\psdots(0.0,0.0)
\psdots(0.0,1.0)
\psdots(1.0,0.0)
\psdots(1.0,1.0)

\psset{arrowscale=1.5 0.7}
\psarc{->}(-0.127,-0.127){0.18}{45}{405}
\psarc{->}(-0.127,1.127){0.18}{-45}{315}

\psset{arrowscale=1.2 1.6}
\psline{<->}(1.0,0.0)(1.0,1.0)

\psset{origin={-12.5,4.0}}

\psdots(0.0,0.0)
\psdots(0.0,1.0)
\psdots(1.0,0.0)
\psdots(1.0,1.0)

\psset{arrowscale=1.5 0.7}
\psarc{->}(-0.127,1.127){0.18}{-45}{315}

\psset{arrowscale=1.2 1.6}
\psline{->}(0.0,0.0)(1.0,1.0)
\psline{->}(1.0,1.0)(1.0,0.0)
\psline{->}(1.0,0.0)(0.0,0.0)

\psset{origin={-15.0,4.0}}

\psdots(0.0,0.0)
\psdots(0.0,1.0)
\psdots(1.0,0.0)
\psdots(1.0,1.0)

\psset{arrowscale=1.5 0.7}
\psarc{->}(-0.127,1.127){0.18}{-45}{315}
\psarc{->}(1.127,-0.127){0.18}{135}{495}

\psset{arrowscale=1.2 1.6}
\psline{<->}(0.0,0.0)(1.0,1.0)

\end{pspicture}
\caption{An acyclic digraph $H$ and its complement $\overline{H}$ (first row), the seven Hamiltonian paths on $\overline{H}$ (second row), and the seven cycle covers of $\overline{H}$ (third row).}\label{bijfig}
\end{figure}
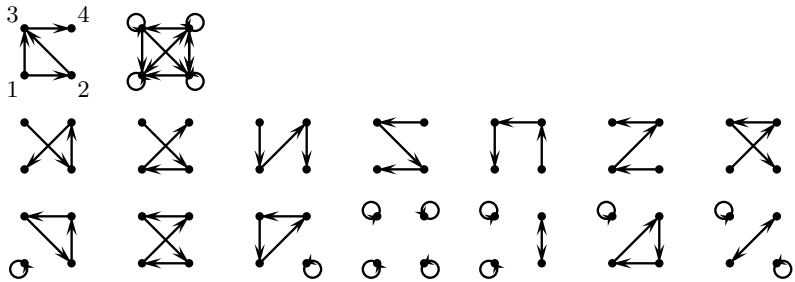

One of the most useful results in combinatorics is that a permutation can be represented either as a word or as a collection of cycles. Graphically, this may be regarded as a correspondence between Hamiltonian paths and cycle covers. The first result of this paper (Theorem \ref{thm:peqc2}) is that such a correspondence exists even if the arrows of the Hamiltonian paths and cycle covers are restricted by an acyclic digraph (see Figure \ref{bijfig}). We give both a simple inductive proof and a direct bijective proof. This particular fact does not seem to have been pointed out before, although it follows immediately from the following known result.

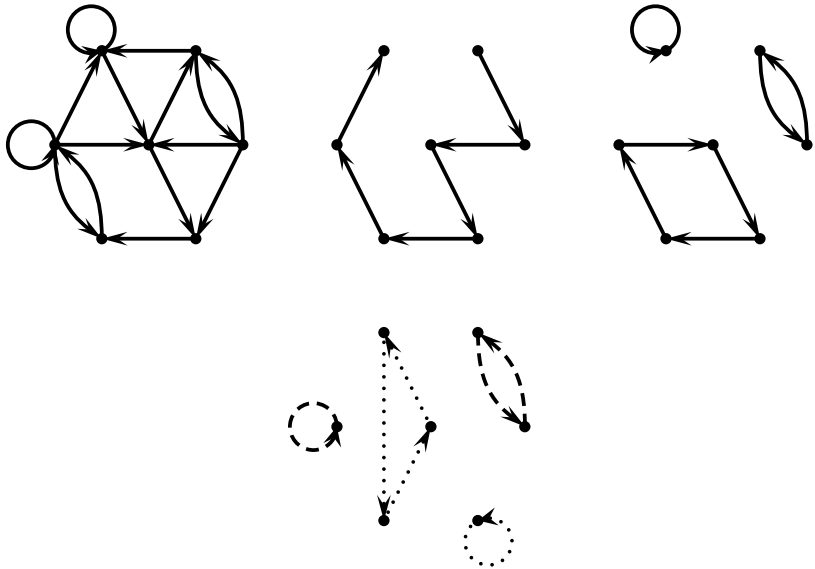
\begin{figure}
\psset{unit=2.5cm}
\begin{pspicture}(0.5,0.25)(5.0,3.25)
\psset{dotsize=0.06}
\psset{linewidth=0.02}
\psset{arrowscale=1.6 1.2}

\psset{origin={0.0,-1.5}}

\psdots(1.0,0.5)
\psdots(0.75,1.0)
\psdots(1.0,1.5)
\psdots(1.5,1.5)
\psdots(1.75,1.0)
\psdots(1.5,0.5)
\psdots(1.25,1.0)

\psarc{->}(0.625,1.0){0.125}{0}{360}
\psarc{->}(0.944,1.612){0.125}{-63.435}{296.565}

\psset{arrowscale=1.2 1.6}
\psline{->}(0.75,1.0)(1.0,1.5)
\psline{->}(1.5,1.5)(1.0,1.5)
\psline{->}(1.0,1.5)(1.25,1.0)
\psline{->}(0.75,1.0)(1.25,1.0)
\psline{->}(1.25,1.0)(1.5,1.5)
\psline{->}(1.25,1.0)(1.5,0.5)
\psline{->}(1.5,0.5)(1.0,0.5)
\psline{->}(1.75,1.0)(1.5,0.5)
\psline{->}(1.75,1.0)(1.25,1.0)

\psbezier{->}(1.5,1.5)(1.5,1.2765)(1.55,1.15)(1.75,1.0)
\psbezier{->}(1.75,1.0)(1.75,1.2235)(1.7,1.35)(1.5,1.5)

\psbezier{->}(0.75,1.0)(0.75,0.7765)(0.8,0.65)(1.0,0.5)
\psbezier{->}(1.0,0.5)(1.0,0.7235)(0.95,0.85)(0.75,1.0)

\psset{origin={-1.5,-1.5}}

\psset{dotsize=0.06}
\psset{linewidth=0.02}
\psset{arrowscale=1.6 1.2}

\psdots(1.0,0.5)
\psdots(0.75,1.0)
\psdots(1.0,1.5)
\psdots(1.5,1.5)
\psdots(1.75,1.0)
\psdots(1.5,0.5)
\psdots(1.25,1.0)

\psset{arrowscale=1.2 1.6}
\psline{->}(0.75,1.0)(1.0,1.5)
\psline{->}(1.25,1.0)(1.5,0.5)
\psline{->}(1.5,0.5)(1.0,0.5)
\psline{->}(1.75,1.0)(1.25,1.0)
\psline{->}(1.5,1.5)(1.75,1.0)
\psline{->}(1.0,0.5)(0.75,1.0)

\psset{origin={-3.0,-1.5}}

\psset{dotsize=0.06}
\psset{linewidth=0.02}
\psset{arrowscale=1.6 1.2}

\psdots(1.0,0.5)
\psdots(0.75,1.0)
\psdots(1.0,1.5)
\psdots(1.5,1.5)
\psdots(1.75,1.0)
\psdots(1.5,0.5)
\psdots(1.25,1.0)

\psarc{->}(0.944,1.612){0.125}{-63.435}{296.565}

\psset{arrowscale=1.2 1.6}
\psline{->}(1.25,1.0)(1.5,0.5)
\psline{->}(1.5,0.5)(1.0,0.5)
\psline{->}(0.75,1.0)(1.25,1.0)
\psline{->}(1.0,0.5)(0.75,1.0)

\psbezier{->}(1.5,1.5)(1.5,1.2765)(1.55,1.15)(1.75,1.0)
\psbezier{->}(1.75,1.0)(1.75,1.2235)(1.7,1.35)(1.5,1.5)

\psset{origin={-1.5,0.0}}

\psset{dotsize=0.06}
\psset{linewidth=0.02}
\psset{arrowscale=1.6 1.2}

\psdots(1.0,0.5)
\psdots(0.75,1.0)
\psdots(1.0,1.5)
\psdots(1.5,1.5)
\psdots(1.75,1.0)
\psdots(1.5,0.5)
\psdots(1.25,1.0)

\psset{linestyle=dashed}
\psarc{->}(0.625,1.0){0.125}{0}{360}
\psset{linestyle=dotted}
\psarc{->}(1.556,0.388){0.125}{116.565}{476.565}

\psset{arrowscale=1.2 1.6}
\psset{linestyle=dotted}
\psline{->}(1.0,1.5)(1.0,0.5)
\psline{->}(1.0,0.5)(1.25,1.0)
\psline{->}(1.25,1.0)(1.0,1.5)

\psset{linestyle=dashed}
\psbezier{->}(1.5,1.5)(1.5,1.2765)(1.55,1.15)(1.75,1.0)
\psbezier{->}(1.75,1.0)(1.75,1.2235)(1.7,1.35)(1.5,1.5)

\end{pspicture}
\caption{A digraph $H$, a Hamiltonian path on $H$, and a cycle cover of $H$ (first row); a pair of cycle covers of spanning induced subgraphs, the first (dashed lines) of $H$ and the second (dotted lines) of $\overline{H}$ (second row).}\label{exfig}
\end{figure}

Let $H$ be any digraph with vertex set $V$ (loops are allowed, but not multiple edges). The complement $\overline{H}$ is the digraph with an edge from $v$ to $w$ if and only if $H$ does not contain such an edge. If $T\subseteq V$ the subgraph $H|_T$ of $H$ \emph{induced} by $T$ has vertex set $T$ and edge set consisting of all edges of $H$ with both ends in $T$. The following sum will be taken over pairs of induced subgraphs; we say that such a pair is \emph{spanning} if the subgraphs are induced by a pair of disjoint sets with union $V$. Let $\pi_H$ be the number of Hamiltonian paths on $H$. A cycle cover is a subgraph of $H$ that is a union of disjoint cycles and such that every vertex is contained in some cycle. The length $\ell(F)$ of a cycle cover $F$ is the number of cycles; the weight $W(F)$ is the number of vertices. See Figure \ref{exfig} for examples of a Hamiltonian path and a cycle cover of a digraph. The following identity expresses the enumerative relationship between paths and cycle covers. It is proved in \cite[Theorem 4.1]{MR1928099} by Lass, who attributes it to Berge.
\begin{equation}\label{eq:pcgen}
\pi_H=\sum_{F_1, F_2}(-1)^{W(F_2)-\ell(F_2)},
\end{equation}
where the sum is over all pairs of cycle covers of spanning induced subgraphs, the first of $H$ and the second of $\overline{H}$. See Figure \ref{exfig} for an example of such a pair.

In the case where $H$ is the complement of an acyclic digraph there are no cycle covers of $\overline{H}$, so the right hand side enumerates cycle covers of $H$. Hence this special case reduces to our Theorem \ref{thm:peqc2}.

Using Theorem \ref{thm:peqc2} and a combinatorial lemma in symmetric function theory (Lemma \ref{lem:eco}), we obtain the following application. The chromatic symmetric function $X_G(x_1,x_2,\ldots)\stackrel{,}{=} X_G$ of a graph $G$ is a symmetric function that generalizes the chromatic polynomial. It was first studied in detail by Stanley \cite{MR96b:05174}, but the open problem that has been the motivation for much subsequent work, the \emph{poset-chain} conjecture, was posed earlier by Stanley and Stembridge~\cite[Conjecture 5.5]{MR1207737}. This conjecture asserts that the chromatic symmetric function of the incomparability graph of certain posets is $e$-positive, meaning the coefficients in its expansion in terms of elementary symmetric functions (called the $e$-coefficients of the graph) are all nonnegative. Our application (Corollary \ref{cor:eco}) is a new formula for the $e$-coefficients of incomparability graphs.

The simplest case of this formula exposes some interesting combinatorics. We describe two bijections involving acyclic orientations: the shatter bijection between weakly decreasing Hamiltonian paths and acyclic orientations of an incomparability graph (Theorem \ref{patho}) and the second-sink bijection, which for connected incomparability graphs gives a correspondence between circular acyclic orientations with a fixed greatest sink and acyclic orientations with a unique sink at the same fixed vertex (Proposition \ref{prop:sec}). We also elaborate the well-known result that the linear term of the chromatic polynomial is, up to sign, the number of acyclic orientations with a unique sink at a fixed vertex; this involves a third bijection, well known in trace theory (see \cite{MR1075995}), between acyclic orientations and stable link sequences (Theorem \ref{thm:sls}).


Throughout this article a comma placed over a binary relation indicates the presence of a relative clause. For example, the notation $S\stackrel{,}{\subseteq}V$ can be read as the noun phrase ``$S$, which is a subset of $V$.''

\section{Enumeration of paths and cycles}

A digraph $D$ consists of a vertex set $V$ and an edge or arrow set which is a subset of $V\times V$. Hence our digraphs are permitted to have loops but not multiple arrows. An arrow $(v,w)$ is said to point to $w$ and to point from $v$. A digraph $F$ whose vertex and arrow sets are subsets of the vertex and arrow sets of $D$ is a \emph{subdigraph} of $D$. If $F$ and $D$ share the \emph{same} vertex set, we write $F\subseteq D$ ($F$ is then called a \emph{spanning} subdigraph of $D$). If $S$ is a subet of the vertex set, the \emph{restriction of $D$ to $S$}, denoted $D|_S$, is the digraph with vertex set $S$ and with an arrow $(v,w){\blue\black \in S\times S}$ if and only if $(v,w)$ is an arrow in $D$. The \emph{complement} of a digraph has an arrow $e$ if and only if $D$ does not contain $e$, and is denoted $\overline{D}$. Note that $\overline{D|_S}=\overline{D}|_S$. A \emph{path} or \emph{cycle on $D$} is a subdigraph that is a (directed) path or cycle digraph. Paths and cycles are never allowed to use the same vertex twice (that is, no two arrows can point to or from the same vertex). A path may be an isolated vertex; a cycle may be a loop. If there are no cycles on $D$, then $D$ is said to be \emph{acyclic}. A path or cycle passing through \emph{every} vertex is said to be \emph{Hamiltonian}. A \emph{path cover} $E\stackrel{,}{\subseteq} D$ of a digraph $D$ is a digraph consisting of a disjoint union of paths on $D$ such that every vertex is included in exactly one path. A \emph{cycle cover} $F\stackrel{,}{\subseteq} D\black$ is defined similarly.

\begin{thm}\label{thm:peqc2}Suppose $D$ is an acyclic digraph. Then the number of Hamiltonian paths on $\overline{D}$ is equal to the number of cycle covers of $\overline{D}$.
\end{thm}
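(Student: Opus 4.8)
The quickest route is to specialize the Berge--Lass identity \eqref{eq:pcgen}. The plan is to set $H=\overline{D}$, so that $\overline{H}=D$. Because $D$ is acyclic, every induced subgraph $D|_T$ is acyclic as well, and hence contains a cycle cover only when $T=\varnothing$ (a cycle cover must place every vertex on a cycle). Thus in the sum \eqref{eq:pcgen} the factor $F_2$ is forced to be the empty cover, contributing sign $(-1)^{0-0}=+1$, and the spanning condition then forces $F_1$ to be a cycle cover of all of $\overline{D}$. The identity collapses to
\begin{equation*}
\pi_{\overline{D}}=\sum_{F_1}1,
\end{equation*}
the number of cycle covers of $\overline{D}$, which is exactly the assertion. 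This already proves Theorem~\ref{thm:peqc2}, and it is the deduction alluded to in the introduction; I would present it first as the economical argument.

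For a self-contained and combinatorially transparent proof I would instead look for an explicit bijection, and the natural candidate is Foata's fundamental correspondence between a permutation written as a word and as a product of cycles. Fix a topological labelling $1<2<\cdots<n$ of $V$, so that every arrow of $D$ ascends. A Hamiltonian path on $\overline{D}$ is then a linear word whose only constraint is that no \emph{ascending} adjacent pair lies in $D$; a cycle cover of $\overline{D}$ is a permutation $\sigma$ whose only constraint is that no ascending pair $(i,\sigma(i))$ lies in $D$ (descents and loops are always permitted). The hope is that writing each cycle with its largest element first and concatenating the cycles reproduces exactly the path constraints.

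The main obstacle is that this naive correspondence does \emph{not} respect the $D$-constraints. Reading a cycle $a_1\cdots a_m$ (largest element $a_1$ first) as a word records every arrow $a_i\to\sigma(a_i)$ except the wrap-around $a_m\to a_1$ into the cycle maximum, and replaces it with a boundary arrow into the next cycle's leader; since both of these are ascending, whether they lie in $D$ need not agree. Already on $V=\{1,2\}$ with $D=\{(1,2)\}$ the unique cycle cover (two loops) is sent to the word $12$, which is not a valid path, while the valid path is $21$. Hence a correct bijection must be adapted to $D$ rather than to the underlying order. To build one I would argue by induction: delete a sink $n$ of $D$ and compare how $n$ is inserted into a path versus into a cycle---both admit precisely $n-|A|$ insertions, where $A$ is the in-neighbourhood of $n$ in $D$---and the crux will be to reconcile the two deletion maps, since bypassing $n$ can manufacture a forbidden ascending arrow of $D$ in either model. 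Showing that these obstructions cancel in tandem (equivalently, that contracting such an arrow again yields the complement of an acyclic digraph) is the step I expect to require the most care.
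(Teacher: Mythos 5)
Your first argument is correct and complete: setting $H=\overline{D}$ in \eqref{eq:pcgen}, the acyclicity of $D$ kills every nonempty cycle cover of an induced subgraph of $\overline{H}=D$, so $F_2$ must be empty, $F_1$ must be a cycle cover of all of $\overline{D}$, and the identity collapses to the claim. This is exactly the reduction the paper itself cites at the start of its proof (and again in the Note after Corollary \ref{cor:rtree}), so the theorem is established, with the real work outsourced to the Lass--Berge identity.

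The self-contained portion of your proposal, however, stops short of a proof, and it is worth recording how the paper closes the gaps you flag. You correctly diagnose why the naive Foata convention (each cycle written with its largest element first) fails: the wrap-around arrow into the cycle maximum and the boundary arrow into the next leader are both ascents, so membership in $D$ is uncontrolled. The paper's bijective proof uses the opposite splitting: given a path $F$ ending at $z$, cut after the \emph{last} vertex $v$ with $v>z$, so the split-off suffix $T$ has $z$ as its maximum. The arrow $(z,u)$ closing that suffix into a cycle and the arrow $(v,w)$ destroyed by the cut are then both weak descents in the topological order, hence automatically absent from $D$ and present in $\overline{D}$; on your $V=\{1,2\}$, $D=\{(1,2)\}$ example this sends the path $21$ to the two loops, as required. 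Your fallback induction on deleting a sink does not close as stated: deleting $n$ from a Hamiltonian path can bridge a forbidden arrow, so the count is not the product of insertion numbers (for $V=\{1,2,3\}$, $D=\{(1,2)\}$ there are $4$ paths on $\overline{D}$ but only $3$ arise by inserting $3$ into the unique path $21$ on the smaller graph, and the same discrepancy occurs on the cycle side). The paper's first proof organizes exactly the cancellation you anticipate as a deletion--contraction recurrence on a non-arrow $e$ of $D$: both counts satisfy $\alpha(D)=\alpha(D\cup e)+\alpha(D/e)$, since objects on $\overline{D}$ avoiding $e$ match those on $\overline{D\cup e}$ and objects using $e$ match those on $\overline{D/e}$; the contraction $D/e$ stays acyclic whenever $D\cup e$ is, and the base case is the complete acyclic digraph. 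That recurrence is the precise form of the ``obstructions cancel in tandem'' step you identify as the crux but do not carry out.
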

\begin{proof}This follows immediately from Equation \eqref{eq:pcgen} below, proved by Lass in \cite[Theorem 4.1]{MR1928099}, where it is attributed to Berge. It also follows immediately from Corollary \ref{thm:peqc1} below, which follows from Stanley and Stembridge's formula for the cycle symmetric function of the complement of a digraph \cite[Theorem 3.2]{MR1207737} and the fact that $\omega(\Pi_D)=\Pi_{\overline{D}}$ (see \cite[Corollary 2]{MR1375952} or Section \ref{sec:pathrec} below for a proof of the latter equality), or from Chow's reciprocity theorem \cite[Theorem 1]{MR1375952}. We will give here two direct combinatorial proofs.

\emph{First proof.} For a digraph $D$, we define the \emph{contraction} $D/e$ of $D$ by an arrow $e\stackrel{,}{=}(v,w)$ (not necessarily in $D$) to be the digraph obtained from $D$ by deleting all arrows pointing to $w$ or from $v$, and merging $v$ and $w$ into a single vertex. If $e$ is not an arrow of $D$, we denote by $D\cup e$ the digraph obtained by adding $e$ to $D$. Notice that if $D\cup e$ is acyclic, then so is $D/e$. Suppose $\alpha(D)$ is a positive integer defined for all acyclic digraphs $D$. Consider the following two conditions on $\alpha$:
\begin{enumerate}
\item If $e$ is not an arrow of $D$ and $D\cup e$ is acyclic, then $\alpha(D)=\alpha(D\cup e)+\alpha(D/e)$.
\item If $D$ is a complete acyclic digraph (an acyclic digraph with $\binom{|V|}{2}$ arrows), then $\alpha(D)=1$.
\end{enumerate}
Clearly, these conditions completely determine $\alpha(D)$. To see that both the number of cycle covers on $\overline{D}$ and the number Hamiltonian paths on $\overline{D}$ satisfy the first condition, notice that cycle covers or Hamiltonian paths on $\overline{D\cup e}$ are in bijection with those on $\overline{D}$ that do not use the arrow $e$, whereas those on $\overline{D/e}$ are in bijection with those that \emph{do} use $e$. The second condition is also easily verified: the only cycle cover of the complement of a complete acyclic digraph is the one consisting only of loops; the only Hamiltonian path on the complement of a complete acyclic  digraph is the one whose $k$'th vertex points to exactly $k-1$ other vertices.

It is possible to construct a bijection based on this inductive proof by inventing a structure to keep track of which arrow are contracted. A simpler bijection is the following.

\emph{Second proof.} The following algorithm is a refinement of Foata's \emph{first fundamental transformation} for permutations \cite[Chapter 10]{MR675953}. It has been used by Buhler and Graham in the case where $\overline{D}$ is the digraph corresponding to a poset. Fix a total order $R$ of the vertex set $V$ such that if $D$ contains an arrow from $v$ to $w$, then $v<w$ in $R$. The following bijection depends only on this order, not on $D$ (but the domain and range depend on $D$). We use the phrases ``greater than,'' ``less than,'' ``greatest,'' and ``least'' for this order, and the words ``before,'' ``after,'' ``first,'' and ``last'' in reference to a particular Hamiltonian path. We would like to construct a bijection between
\begin{itemize}
\item Hamiltonian paths on $\overline{D}$, and
\item ordered quadruples $(S,T,\rho,c)$, where $S\uplus T\stackrel{,}{=}V$ is an ordered bipartition of the vertex set with $T\neq\emptyset$, $\rho$ is a Hamiltonian path on $\overline{D|_S}$, $c$ is a Hamiltonian cycle on $\overline{D|_{T}}$, and (if $S\neq\emptyset$) the last vertex of $\rho$ is greater than every element of $T$.
\end{itemize}
If we have such a bijection satisfying the additional property that in the image of a path $E$ the set $T$ contains the last vertex of $E$, then this bijection can be iterated on the path $\rho$ until a cycle cover is obtained. For the inverse, we can proceed in reverse, starting with the cycle containing the largest vertex. See Figure \ref{bijfig} for an example of the full bijection; the image of each Hamiltonian path in the second row appears directly beneath it in the third row.

\begin{figure}
\psset{unit=3cm}
\begin{pspicture}(0.0,0.4)(2.7,1.9)
\psset{dotsize=0.06}
\psset{linewidth=0.02}
\psset{arrowscale=1.2}

\psdots(0.2,1.5)
\psdots(0.6,1.7)
\psdots(1.0,1.5)
\psdots(1.2,1.1)
\psdots(1.5,0.8)
\psdots(1.9,0.6)
\psdots(2.3,0.8)
\psdots(2.5,1.2)

\psline{->}(0.2,1.5)(0.6,1.7)
\psline{->}(0.6,1.7)(1.0,1.5)
\psline{->}(1.0,1.5)(1.2,1.1)
\psline{->}(1.5,0.8)(1.9,0.6)
\psline{->}(1.9,0.6)(2.3,0.8)
\psline{->}(2.3,0.8)(2.5,1.2)

\psline[linestyle=dashed]{->}(2.5,1.2)(1.5,0.8)

\uput[45](1.2,1.1){$v\stackrel{,}{>}z$}
\uput[45](2.5,1.2){$z$}
\uput[120](2.0,1.0){$e$}
\uput[-45](2.3,0.8){$<z$}
\uput[-90](1.9,0.6){$<z$}
\uput[-120](1.5,0.8){$<z$}

\put(0.7,1.3){$\rho$}
\put(2.0,0.8){$c$}
\end{pspicture}
\caption{The image of a Hamiltonian path under the bijection described in the second proof of Theorem \ref{thm:peqc2}.}\label{pathcyclefig}
\end{figure}
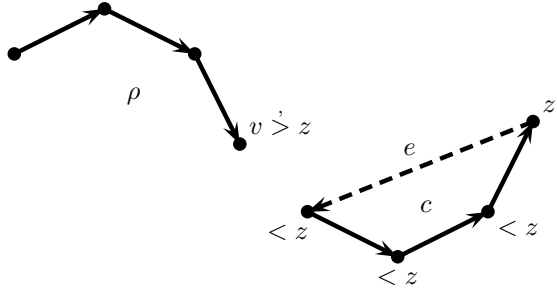 

So let $F$ be a Hamiltonian path on $\overline{D}$. Let $z$ be the last vertex of $F$. Let $v$ be the last vertex of $F$ such that $v>z$. Assuming such a $v$ exists, let $S$ be the set of vertices before and including $v$, and let $T$ be the set of vertices after $v$. If no such $v$ exists, let $S=\emptyset$, and let $T$ be the whole vertex set. Let $\rho=F|_S$. Let $c=F|_{T}\cup e$, where $e$ is the arrow connecting the ends of $F|_{T}$ to form a cycle (see Figure \ref{pathcyclefig}). The arrow $e=(z,u)$ cannot be in $D|_T$; otherwise, $z$ would be less than $u$.

For the inverse, suppose we have a quadruple $(S,T,\rho,c)$. Let $z$ be the greatest vertex in $T$. Assuming $S\neq\emptyset$, form a Hamiltonian path $F$ by starting with $\rho$, then moving along an arrow $f$ to the vertex just after $z$ in $c$, then following $c$ around to end at $z$. If the arrow $f\stackrel{,}{=}(v,w)$ were in $D$, we would have $v<w$, in contradiction to the assumption that $v$ is greater than every element of $T$. In the case where $S=\emptyset$, form $F$ by starting with the vertex just after $z$ in $c$, then following $c$ around to end at $z$.
\end{proof}

We use the notation of \cite{MR1354144} for partitions and symmetric functions. We will also need \emph{augmented monomial symmetric functions} $\tilde{m}_\lambda$, defined by $\tilde{m}_\lambda=(r_1!r_2!\cdots)m_\lambda$, where $\lambda=(1^{r_1}2^{r_2}\cdots)$. The number of parts of a partition $\lambda$ is denoted by $\ell(\lambda)$. All of our symmetric functions will be homogeneous, and we let $n=|\lambda|$ throughout. The \emph{type} $t(D)$ of a digraph $D$ is the partition whose parts are the sizes of the connected components of $D$. We abbreviate $\ell(t(D))$ to $\ell(D)$.

\begin{defn}Let $D$ be a digraph. Define the \emph{path symmetric function} $\Pi_D$ and \emph{cycle symmetric function} $Z_D$ by
\[
\Pi_D=\sum_{E\subseteq D}\tilde{m}_{t(E)},
\]
where the sum is over all path covers of $D$, and
\[
Z_D=\sum_{F\subseteq D}p_{t(F)},
\]
where the sum is over all cycle covers of $D$.
\end{defn}
The cycle symmetric function was introduced (for boards) by Stanley and Stembridge \cite{MR1207737}. (A cycle cover of a digraph is simply a placement of $n$ non-attacking rooks on the corresponding board; the analogues of paths and path covers, however, are inconvenient to define for boards.) Both the path and cycle symmetric functions are special cases of Chow's path-cycle symmetric function \cite{MR1375952}, which is a symmetric function generalization of Chung and Graham's cover polynomial \cite{MR1358990}.

\begin{cor}\label{thm:peqc1}Suppose $D$ is an acyclic digraph. Then $\Pi_{\overline{D}}=Z_{\overline{D}}$.
\end{cor}
\begin{proof}For a (not necessarily acyclic) digraph $H$, define $\pi_H$ to be the number of Hamiltonian paths on $H$, and define $z_H$ to be the number of Hamiltonian cycles on $H$. Let $V$ be the vertex set of $H$. Recall that the restriction of $H$ to $T\stackrel{,}{\subseteq} V$, denoted $H|_T$, is the digraph with vertex set $T$ and all arrows of $H$ with both ends in $T$. We have
\begin{equation}\label{eq:pth}
\Pi_{H}=\sum_{\sigma\vdash V}\tilde{m}_{t(\sigma)}\prod_{T\in\sigma}\pi_{H|_T}
\end{equation}
and
\begin{equation}\label{eq:zth}
Z_{H}=\sum_{\sigma\vdash V}p_{t(\sigma)}\prod_{T\in\sigma}z_{H|_T},
\end{equation}
where the sums are over all \emph{set} partitions of the vertex set $V$, and $t(\sigma)$ is the \emph{integer} partition whose parts are the sizes of the blocks of $\sigma$.

The following fundamental formula is proved in \cite{MR0429577}.
\[
p_{t(\sigma)}=\sum_{\gamma\geq\sigma}\tilde{m}_{t(\gamma)},
\]
where $\geq$ denotes the usual ordering of set partitions by refinement. Substituting into \eqref{eq:zth} with $H=\overline{D}$ and interchanging the order of summation yields
\begin{equation}\label{eq:zasso}
Z_{\overline{D}}=
\sum_{\gamma\vdash V}\tilde{m}_{t(\gamma)}
\sum_{\sigma\leq\gamma}\prod_{U\in\sigma}z_{\overline{D}|_U}\stackrel{,}{=}
\sum_{\gamma\vdash V}\tilde{m}_{t(\gamma)}\prod_{T\in\gamma}
\sum_{\sigma\vdash T}\prod_{U\in\sigma}z_{\overline{D}|_U}.
\end{equation}
But
\[
\sum_{\sigma\vdash T}\prod_{U\in\sigma}z_{\overline{D}|_U}=\pi_{\overline{D}|_T}
\]
by Theorem \ref{thm:peqc2}.
\end{proof}

\begin{note}\label{note:spm}
The computation \eqref{eq:zasso} is best understood in the following terms. A \emph{set partition map} is a function with domain the set of set partitions of finite subsets of a fixed set $V$. For a set partition map $f$, we use the notation $f_\sigma$ to denote the image of a set partition $\sigma\stackrel{,}{\vdash}S\stackrel{,}{\subseteq} V$. When the range is a ring, \emph{composition} of set partition maps is defined by
\[
(f\circ g)_\sigma=\sum_{\pi\geq\sigma}f_\pi\prod_{T\in\pi}g_{\sigma|_T},
\]
where $\geq$ denotes the usual ordering of set partitions by refinement, and $\sigma|_T$ is the set partition of $T$ consisting of the blocks of $\sigma$ contained in $T$. The computation \eqref{eq:zasso} is then essentially the computation that shows composition to be associative.
\end{note}

If we take $D$ to be an empty graph, Theorem \ref{thm:peqc2} follows from the fact that a permutation can be written as a word or as a collection of cycles. This fact can be used to enumerate rooted trees \cite[Example 12]{MR633783}. Theorem \ref{thm:peqc2} gives the following generalization. A \emph{directed tree} with root $r$ is a digraph such that there is a unique (directed) path from every vertex to $r$. A \emph{directed spanning tree} on a digraph $D$ is a spanning subdigraph of $D$ that is a directed tree.
\begin{cor}\label{cor:rtree}
Let $D$ be an acyclic digraph. For a vertex $v\stackrel{,}{\in} V$, let $d(v)$ be the number of arrows in $\overline{D}$ pointing from $v$. Then the number of directed trees on $\overline{D}$ is
\[
\frac{1}{|V|}\prod_{v\in V}d(v).
\]
\end{cor}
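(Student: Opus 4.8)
The plan is to read the statement as a count of \emph{functional subdigraphs} of $\overline{D}$, by which I mean spanning subdigraphs in which every vertex has out-degree exactly one. Since each vertex $v$ may independently be assigned any one of its $d(v)$ outgoing arrows in $\overline{D}$ (the loop $(v,v)$ is always available because $D$ is acyclic, so in particular $d(v)\ge 1$), there are exactly $\prod_{v\in V}d(v)$ functional subdigraphs. Thus the corollary is equivalent to the identity $\prod_{v\in V}d(v)=|V|\cdot(\text{number of directed spanning trees on }\overline{D})$, and I would establish it by constructing a bijection between functional subdigraphs of $\overline{D}$ and pairs $(T,u)$, where $T$ is a directed spanning tree on $\overline{D}$ and $u\in V$ is a marked vertex.

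First I would recall the shape of a functional subdigraph $\phi$: its \emph{core} $C$, the set of vertices lying on a directed cycle, carries the permutation $\phi|_C$, which is precisely a cycle cover of $\overline{D}|_C$; every remaining vertex lies on a unique directed path into $C$, so the non-core arrows form a disjoint union of in-trees attached to $C$. The heart of the bijection is to trade this cyclic part for a linear one. Because $D|_C$ is acyclic, the bijection $\Theta$ between Hamiltonian paths on $\overline{D}|_C$ and cycle covers of $\overline{D}|_C$ furnished by the second proof of Theorem \ref{thm:peqc2} applies; let $\Theta^{-1}(\phi|_C)$ be the Hamiltonian path $p_1\to p_2\to\cdots\to p_k$. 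I would then form $T$ by deleting the core arrows of $\phi$, inserting the arrows $p_1\to p_2\to\cdots\to p_k$ of this path, and retaining every non-core arrow. This makes $p_k$ a unique sink, turns the in-trees into branches hanging off the path, and yields a directed spanning tree rooted at $p_k$; the marked vertex is $u=p_1$, the head of the path.

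For the inverse, given a pair $(T,u)$ with $T$ rooted at $r$, the directed path from $u$ to $r$ in $T$ is a Hamiltonian path on the subgraph induced on its own vertex set (since $T\subseteq\overline{D}$), so applying $\Theta$ produces a cycle cover there; replacing that path by the cycle cover while keeping all other arrows returns a functional subdigraph whose core is exactly the $u$–$r$ path. The two maps are mutually inverse because $\Theta$ is a bijection and the spine, hence the core, is recovered as the path from the marked vertex to the root. The step requiring the most care — and the reason acyclicity of $D$ is essential — is guaranteeing that the passage between the cyclic and linear arrangements of the core never leaves $\overline{D}$; this is exactly what the order-respecting bijection $\Theta$ of Theorem \ref{thm:peqc2} provides, since its cutting rule depends only on a fixed linear extension of $D$, which restricts to one on $C$. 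I would then dispatch the routine bookkeeping — that every vertex of $T$ has out-degree one except the root, that no spurious cycle is created, and that the hanging in-trees still flow toward $p_k$ — and finally note that when $D$ is empty the construction specializes to the classical correspondence between an endofunction and a doubly-rooted tree, recovering the count $|V|^{|V|-1}$ of rooted trees mentioned above.
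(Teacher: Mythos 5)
Your proof is correct and follows essentially the same route as the paper: both establish the identity by a bijection between functional subdigraphs of $\overline{D}$ (equivalently, choices of one out-arrow per vertex, counted by $\prod_{v}d(v)$) and vertex-marked directed spanning trees, exchanging the cyclic core for the path from the marked vertex to the root via the path--cycle-cover bijection of Theorem \ref{thm:peqc2} applied to the restriction $\overline{D}|_C=\overline{D|_C}$ with a fixed linear extension of $D$. The only difference is that you present the map in the direction from functional subdigraphs to pairs $(T,u)$, whereas the paper starts from the pair, and you spell out more of the structural bookkeeping.
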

\begin{proof}
We will describe a bijection between
\begin{itemize}
\item
ordered pairs $(v, T)$, where $v\in V$ and $T$ is a directed tree on $\overline{D}$, and
\item
digraphs $B\stackrel{,}{\subseteq} \overline{D}$ with exactly one arrow pointing from each vertex.
\end{itemize}
Since there are $d(v)$ choices of an arrow pointing from $v$, the result would follow from such a bijection.

So suppose we have a pair $(v,T)$. Let $F$ be the unique path from $v$ to the root $r$ of $T$. Replace $F$ with the corresponding cycle cover given by the bijection of Theorem \ref{thm:peqc2} (using the ordering of the vertices of $F$ compatible with a suitable fixed ordering of $V$). The resulting digraph has exactly one arrow pointing from each vertex. For the inverse, reverse this process, applying the inverse of the bijection of Theorem \ref{thm:peqc2} to the collection of cycles in $B$.
\end{proof}


\begin{note}
Let $H$ be any digraph. The following identity, which was described in the introduction, generalizes Theorem \ref{thm:peqc2} to the case of an arbitrary (not necessarily acyclic) digraph. It is proved by Lass in \cite[Theorem 4.1]{MR1928099}, where it is attributed to Berge.
\begin{equation}\label{eq:pcgen}
\pi_H=\sum_{\sigma\vdash V}\prod_{T\in\sigma}\left(z_{H|_T}-\left(-1\right)^{|T|}z_{\overline{H}|_T}\right).
\end{equation}
In the case where $H$ is the complement of an acyclic graph, there are no cycle covers of $\overline{D}$. Hence we are left with Theorem \ref{thm:peqc2}. Also, in the case where $H$ itself is acyclic, \eqref{eq:pcgen} implies that the determinant of the adjacency matrix of the complement of an acyclic digraph $H$ is the number of Hamiltonian paths on $H$ (which is $0$ or $1$). This result has been used by Stanley \cite{MR1383514}.
\end{note}

\section{$e$-coefficients of incomparability graphs}

The main result of this section is based on the following lemma, which gives a combinatorial interpretation of the coefficients in the expansion of power sum symmetric functions in terms of elementary symmetric functions.
\begin{lem}\label{lem:eco}
Let $\tau_\lambda$ be a digraph consisting of $\ell(\lambda)$ disjoint directed cycles whose lengths are equal to the parts of $\lambda$ (so $t(\tau(\lambda))=\lambda$). Then
\[
p_\lambda=(-1)^n\sum_{\substack{E\subseteq \tau_\lambda\\\mathrm{acyclic}}}(-1)^{\ell(E)}e_{t(E)}\stackrel{,}{=}(-1)^n\sum_{\mu}(-1)^{\ell(\mu)}c_\lambda^\mu e_\mu,
\]
where $c_\lambda^\mu$ is the number of acyclic spanning subdigraphs of $\tau_\lambda$ of type $\mu$.
\end{lem}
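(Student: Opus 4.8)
The plan is to reduce immediately to the case of a single directed cycle and then match coefficients with the classical expansion of a single power sum in the elementary basis.

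First I would observe that both sides are multiplicative over the connected components of $\tau_\lambda$. Since $\tau_\lambda$ is the disjoint union of directed cycles $C_{\lambda_1},\ldots,C_{\lambda_{\ell(\lambda)}}$, every acyclic spanning subdigraph $E$ decomposes uniquely as $E=E_1\uplus\cdots\uplus E_{\ell(\lambda)}$, where $E_i$ is an acyclic spanning subdigraph of $C_{\lambda_i}$; moreover $\ell(E)=\sum_i\ell(E_i)$, and $t(E)$ is the multiset union of the $t(E_i)$, so that $e_{t(E)}=\prod_i e_{t(E_i)}$. Hence the right-hand side factors as a product over $i$ of the single-cycle sums. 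Since $p_\lambda=\prod_i p_{\lambda_i}$, it suffices to prove the identity when $\lambda=(k)$ and $\tau_\lambda=C_k$ is a single directed $k$-cycle.

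Next I would analyze the acyclic spanning subdigraphs of $C_k$. The only directed cycle on $C_k$ is $C_k$ itself, so the acyclic spanning subdigraphs are exactly the proper subsets of its $k$ arrows; cutting a set of $m\ge 1$ arrows breaks the cycle into $m$ directed paths, so such an $E$ has $\ell(E)=m$ components and type $\mu=t(E)\vdash k$ with $\ell(\mu)=m$. I would then count, for each such $\mu$, the number $c_{(k)}^\mu$ of these subdigraphs by a double count of the pairs $(E,a)$ in which $a$ is one of the cut arrows of $E$. On one side this count equals $m\,c_{(k)}^\mu$. On the other, a pair $(E,a)$ is the same datum as a choice of one of the $k$ arrows (the position of $a$) together with the sequence of arc-sizes read cyclically starting from $a$, which is a composition of $k$ of content $\mu$; there are $k\cdot\frac{m!}{\prod_j r_j!}$ such data, where $r_j$ is the number of parts of $\mu$ equal to $j$. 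This yields $c_{(k)}^\mu=\frac{k\,(m-1)!}{\prod_j r_j!}$.

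Finally I would compare with the classical relation coming from $\sum_{n}e_n t^n=\exp\!\big(\sum_{k\ge1}\tfrac{(-1)^{k-1}}{k}p_k t^k\big)$. Extracting $p_k=(-1)^{k-1}k\,[t^k]\log\big(1+\sum_{j\ge1}e_j t^j\big)$ and expanding the logarithm gives exactly $p_k=(-1)^k\sum_{\mu\vdash k}(-1)^{\ell(\mu)}\tfrac{k\,(\ell(\mu)-1)!}{\prod_j r_j!}\,e_\mu$. Since the coefficient appearing here is precisely $c_{(k)}^\mu$, the single-cycle case $p_k=(-1)^k\sum_{E\subseteq C_k\ \mathrm{acyclic}}(-1)^{\ell(E)}e_{t(E)}$ follows, and the multiplicative reduction completes the proof. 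The main obstacle is the bookkeeping in the double count: one must ensure that cutting arcs around a cycle with labeled vertices is matched exactly against the factor $\tfrac1m$ produced by the logarithm, that is, that the cyclic-symmetry factor $k/m$ is accounted for correctly, since an off-by-a-symmetry-factor error would most easily creep in at that point.
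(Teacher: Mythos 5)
Your argument is correct, but it takes a genuinely different route from the paper's. The paper also reduces to a single part $\lambda=(k)$, but then proceeds by direct inclusion--exclusion: it writes $p_k=\sum_{i_1\le i_2\le\cdots\le i_k\le i_1}x_{i_1}x_{i_2}\cdots x_{i_k}$ and applies $\sum_{\le}=\sum-\sum_{>}$ to each of the $k$ inequalities. The set of inequalities sharpened to strict ones is exactly the arrow set of an acyclic spanning subdigraph $E\subseteq\tau_k$ (the all-strict term vanishes because $i_1>i_2>\cdots>i_k>i_1$ is impossible), and the corresponding term is $(-1)^{k-\ell(E)}e_{t(E)}$ since each maximal run of strict inequalities sums to an elementary symmetric function. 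This produces the acyclic subdigraphs directly as the terms of the expansion, with no need to evaluate $c_{(k)}^{\mu}$ or to invoke the classical $e$-expansion of $p_k$. You instead compute $c_{(k)}^{\mu}=k(\ell(\mu)-1)!/\prod_j r_j!$ by a cut-arrow double count (which is right, including the $k/m$ symmetry factor you flag as the danger point) and match it against the coefficient extracted from $\log\bigl(\sum_j e_j t^j\bigr)$ via the identity $E(t)=\exp\bigl(\sum_{k\ge1}\tfrac{(-1)^{k-1}}{k}p_kt^k\bigr)$; this is valid, at the cost of importing that generating-function identity as known, whereas the paper's computation is self-contained and in effect reproves it. Your version does make the closed form of the coefficients $c_{\lambda}^{\mu}$ explicit, which the paper never needs but which is a pleasant byproduct.
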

\begin{proof}It will suffice to prove the case where $\lambda$ has only one part. We have
\[
p_k=
\sum_{\substack{i_1,i_2,\ldots,i_k\\i_1=i_2=\cdots=i_k}}x_{i_1} x_{i_2}\cdots x_{i_k}\stackrel{,}{=}
\sum_{\substack{i_1,i_2,\ldots,i_k\\i_1\leq i_2\leq\cdots\leq i_k\leq i_1}}x_{i_1} x_{i_2}\cdots x_{i_k}.
\]
Using the principle
\[
\sum_\leq=\sum-\sum_>
\]
on all occurrences of $\leq$ beneath this sum, we obtain
\[
p_k=(-1)^k\sum_{\substack{E\subseteq \tau_k\\\mathrm{acyclic}}}(-1)^{\ell(E)}e_{t(E)}.
\]
For example,
\[
p_2=\sum_{\substack{i_1,i_2\\i_1=i_2}}x_{i_1} x_{i_2}\stackrel{,}{=}
\sum_{\substack{i_1,i_2\\i_1\leq i_2\leq i_1}}x_{i_1} x_{i_2},
\]
which expands into four sums:
\[
\sum_{i_1,i_2}x_{i_1} x_{i_2}-
\sum_{\substack{i_1,i_2\\i_1>i_2}}x_{i_1} x_{i_2}-
\sum_{\substack{i_1,i_2\\i_2>i_1}}x_{i_1} x_{i_2}+
\sum_{\substack{i_1,i_2\\i_1>i_2>i_1}}x_{i_1} x_{i_2}.
\]
The first three sums correspond to the three acyclic spanning sudigraphs of $\tau_2$. The fourth sum is $0$. Since the first sum is equal to $e_{11}$ and the second two are equal to $e_2$, we have
\[
p_2=e_{11}-2e_2.
\]
\end{proof}

To state our next result we will need the following more general notion of contraction.
\begin{defn}
Let $D$ be a digraph and $E\stackrel{,}{\subseteq} D$ a path cover. We define $D/E$ to be the digraph with vertices the components of $E$ (which are paths) and with an arrow from $v$ to $w$ if and only if $D$ has an arrow from the end of the path $v$ to the beginning of the path $w$.
\end{defn}

The determinant of a square matrix $A\stackrel{,}{=}(a_{i,j})_{1\leq i,j\leq n}$ is defined by
\[
\mathrm{det}(A)=\sum_{\omega}\mathrm{sign}(\omega)\prod_{i=1}^n a_{i,\omega(i)},
\]
where the sum is over all permutations of $\{1,2,\ldots,n\}$ and $\mathrm{sign}(\omega)$ is the sign of the permutation $\omega$.

\begin{thm}\label{thm:zexp}Let $D$ be a digraph.
\begin{enumerate}
\item
The coefficient of $(-1)^{n-\ell(\lambda)}e_\lambda$ in $Z_D$ is the number of ordered pairs $(E,F)$, where $E\subseteq D$ is a path cover of $D$ of type $\lambda$ and $F\subseteq D/E$ is a cycle cover of $D/E$.
\item
The coefficient of $h_\lambda$ in $Z_D$ is
\[
\sum_{\substack{E\subseteq D\\t(E)=\lambda}}\mathrm{det}(D/E),
\]
where the sum is over all path covers of $D$ of type $\lambda$ and $\mathrm{det}(D/E)$ is the determinant of the adjacency matrix of $D/E$.
\end{enumerate}
\end{thm}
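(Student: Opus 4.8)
The plan is to begin from the defining expansion $Z_D=\sum_{F\subseteq D}p_{t(F)}$, the sum being over cycle covers $F$ of $D$, and to expand each power sum $p_{t(F)}$ by Lemma \ref{lem:eco}. The crucial observation is that a cycle cover $F$ is \emph{literally} a disjoint union of directed cycles, so $F$ itself serves as $\tau_{t(F)}$, and the acyclic spanning subdigraphs of $F$ appearing in the lemma are exactly the path covers $E$ of $F$ (deleting at least one arrow from each cycle of $F$ leaves a spanning union of paths, and conversely). Substituting Lemma \ref{lem:eco} therefore turns $Z_D$ into a double sum indexed by pairs $(F,E)$ where $E\subseteq F$ is a path cover and $F$ is a cycle cover containing it. I would then reverse the order of summation so as to sum over path covers $E\subseteq D$ first, with the inner sum ranging over cycle covers $F$ of $D$ with $E\subseteq F$.

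The main step is a structural bijection between cycle covers $F$ of $D$ with $E\subseteq F$ and cycle covers of the contraction $D/E$. Given such an $F$, every arrow of $F$ outside $E$ must run from the end of one path-component of $E$ to the beginning of another: the internal and initial vertices of each path already have their $F$-arrows inside $E$, so the only freedom is at the path-ends. These extra arrows are precisely the arrows of a cycle cover of $D/E$, and conversely every cycle cover of $D/E$ closes the paths of $E$ up into a cycle cover of $D$ containing $E$. This correspondence preserves the number of cycles, so $\ell(F)$ equals the number of cycles of the associated cover of $D/E$, and the vertex set of $D/E$ has $\ell(E)=\ell(\lambda)$ elements when $t(E)=\lambda$. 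For part (1) no signs are present, so after the reversal the inner sum simply counts cycle covers of $D/E$; collecting the sign $(-1)^{n}(-1)^{\ell(E)}=(-1)^{n-\ell(\lambda)}$ supplied by Lemma \ref{lem:eco} then yields exactly the claimed enumeration of pairs $(E,F)$.

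For part (2) I would first produce the $h$-analogue of Lemma \ref{lem:eco} by applying the standard involution $\omega$, using $\omega(p_k)=(-1)^{k-1}p_k$ and $\omega(e_\lambda)=h_\lambda$, which gives $p_\lambda=(-1)^{\ell(\lambda)}\sum_{E}(-1)^{\ell(E)}h_{t(E)}$, the sum being over acyclic spanning subdigraphs $E$ of $\tau_\lambda$. Running the identical reorganization, the sign $(-1)^{\ell(F)}$ now rides along on each cycle cover $F$; by the bijection above this becomes a \emph{signed} count of cycle covers of $D/E$. Since a permutation with $\ell(F)$ cycles on $m=\ell(E)$ points has sign $(-1)^{m-\ell(F)}$, the cycle expansion of the determinant gives $\sum_{F'\subseteq D/E}(-1)^{\ell(F')}=(-1)^{\ell(E)}\det(D/E)$. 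The two factors of $(-1)^{\ell(E)}$, one from the $h$-expansion and one from the determinant, cancel, leaving $Z_D=\sum_{E}\det(D/E)\,h_{t(E)}$, whence the coefficient of $h_\lambda$ is the stated sum over path covers of type $\lambda$.

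The hard part is establishing the contraction bijection cleanly and verifying that it preserves the number of cycles; this is where all the combinatorial content lies, and it must be checked carefully at the path-ends (including the degenerate cases of loops and isolated-vertex paths). Once that correspondence is in hand, both parts reduce to sign bookkeeping, with part (2) additionally invoking the elementary fact that the determinant of an adjacency matrix is the signed enumerator of its cycle covers.
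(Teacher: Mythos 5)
Your proposal is correct and follows essentially the same route as the paper: expand $Z_D$ via Lemma \ref{lem:eco} (noting that a cycle cover is its own $\tau_{t(F)}$ and its acyclic spanning subdigraphs are path covers), interchange the order of summation, identify cycle covers of $D$ containing $E$ with cycle covers of $D/E$, and for part (2) convert the signed inner sum into $\det(D/E)$ via the permutation--cycle-cover correspondence. The only cosmetic difference is that you transport the signs by applying $\omega$ to Lemma \ref{lem:eco} to get an $h$-expansion of $p_\lambda$, whereas the paper equivalently computes the coefficient of $e_\lambda$ in $\omega(Z_D)$; your explicit verification of the contraction bijection at the path-ends is a detail the paper leaves to the reader.
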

Before proving this theorem, let us consider two examples of Part 2.
\begin{ex}For a one-part partition $\lambda=(n)$, a path cover of $D$ of type $\lambda$ is a simply a Hamiltonian path on $D$. In this case, the digraph $D/E$ contains a single vertex $v$, and the loop $(v,v)$ if and only if the arrow connecting the last vertex of $E$ to the first vertex of $E$ is in $D$. Hence the coefficient of $h_n$ in $Z_D$ is the number of Hamiltonian cycles on $D$ with one edge missing. We call these \emph{broken} Hamiltonian cycles on $D$ (see Section \ref{sec:ss}).
\end{ex}
\begin{ex}For a two-part partition $\lambda=(a,b)$, a path cover of $D$ of type $\lambda$ is a pair $E=E_1\cup E_2$ of disjoint paths on $D$ such that $E_1$ has $a$ vertices and $E_2$ has $b$ vertices. We call such a cover an \emph{$ab$-path cover} of $D$. In this case, the digraph $D/E$ contains two vertices. There are two possible cycle covers of a $2$-vertex digraph: two loops ($\alpha$) or a $2$-vertex cycle ($\beta$). The determinant of the $2\times2$ adjacency matrix of $D/E$ is $1$ if $D/E$ contains only $\alpha$, it is $-1$ if $D/E$ contains only $\beta$, and it is $0$ otherwise. We say that an $ab$-path cover is \emph{like} $\alpha$ if each path can be connected end-to-beginning to form a pair of cycles on $D$. We say that it is like $\beta$ if the end of $E_1$ can be connected to the beginning of $E_2$ and the end of $E_2$ to the beginning of $E_1$ to form a single Hamiltonian cycle on $D$. Then the coefficient of $h_{ab}$ in $Z_D$ is the difference of
\begin{itemize}
\item the number of $ab$-path covers like $\alpha$ but not like $\beta$, and
\item the number of $ab$-path covers like $\beta$ but not like $\alpha$.
\end{itemize}
\end{ex}

\begin{proof}[Proof of Theorem \ref{thm:zexp}]From the definition of $Z_D$ and Lemma \ref{lem:eco}, the coefficient of $e_\lambda$ is
\[
(-1)^{n-\ell(\lambda)}\sum_{F\subseteq D}
\sum_{\substack{E\subseteq F\\\mathrm{acyclic}\\t(E)=\lambda}}1,
\]
where the outer sum is over all cycle covers of $D$. An acyclic spanning subgraph of $F$ is clearly a path cover, so after interchanging the order of summation this becomes
\[
(-1)^{n-\ell(\lambda)}
\sum_{\substack{E\subseteq D\\t(E)=\lambda}}
\sum_{\substack{E\subseteq F\subseteq D}}1,
\]
where the outer sum is over all path covers of $D$ of type $\lambda$ and the inner sum is over all cycle covers of $D$ containing $E$ as a spanning subgraph. Such cycle covers are in bijection with cycle covers of $D/E$. This proves the first part.

From the definition of $Z_D$ and Lemma \ref{lem:eco}, the coefficient of $e_\lambda$ in $\omega(Z_D)$ is
\[
(-1)^{\ell(\lambda)}\sum_{F\subseteq D}(-1)^{\ell(F)}
\sum_{\substack{E\subseteq F\\\mathrm{acyclic}\\t(E)=\lambda}}1,
\]
where the outer sum is over all cycle covers of $D$. As before, this becomes
\[
(-1)^{\ell(\lambda)}
\sum_{\substack{E\subseteq D\\t(E)=\lambda}}
\sum_{\substack{E\subseteq F\subseteq D}}(-1)^{\ell(F)},
\]
where the outer sum is over all path covers of $D$ of type $\lambda$ and the inner sum is over all cycle covers of $D$ containing $E$ as a spanning subgraph. Using the standard correspondence between cycle covers and permutations (a permutation can be written as a disjoint union of cycles), we see that the cycle covers of $D/E$ correspond to the permutations that contribute nonzero summands to the determinant of the adjacency matrix of $D/E$. The sign of a permutation is $(-1)^{r-k}$, where $r$ is the number of vertices and $k$ is the number of cycles; in the present case, $r=\ell(E)\stackrel{,}{=}\ell(\lambda)$ and $k=\ell(F)$. Hence
\[
(-1)^{\ell(\lambda)}
\sum_{\substack{E\subseteq F\subseteq D}}(-1)^{\ell(F)}=
(-1)^{\ell(\lambda)}\sum_{\substack{F\subseteq D/E}}(-1)^{\ell(F)}\stackrel{,}{=}\mathrm{det}(D/E),
\]
where the second sum is over all cycle covers of $D/E$.
\end{proof}

If $P$ is a poset with underlying set $V$ (we call elements of $V$ vertices), its \emph{incomparability graph} $\mathrm{inc}(P)$ is the graph with an edge between $v$ and $w$ if and only if $v$ and $w$ are incomparable in $P$. A poset also induces an acyclic digraph on $V$ with an arrow from $v$ to $w$ if and only if $v<w$ in $P$. Both the poset and this digraph will be denoted by $P$. A \emph{weakly decreasing} path or cycle on $P$ is a (directed) path or cycle such that consecutive elements are either incomparable or decreasing (so it is a path or cycle on $\overline{P}$). Likewise, a weakly decreasing path or cycle cover of $P$ is a path or cycle cover of $\overline{P}$. A loop is considered to be a weakly decreasing cycle. We will often say, ``the arrow from $v$ to $w$ is weakly decreasing,'' to mean that $v$ is not less than $w$, even if there is no digraph under consideration that actually contains that arrow.

Let $G$ be a graph with vertex set $V$. A subset $S\subseteq V$ is \emph{stable} if $G$ contains no edges between the the vertices of $S$. A \emph{stable partition} of $V$ is a set partition in which every block is stable. The \emph{chromatic symmetric function} \cite{MR96b:05174} of $G$ is given by
\[
X_G=\sum_{\substack{\sigma\vdash V\\\mathrm{stable}}}\tilde{m}_{t(\sigma)},
\]
where the sum is over all stable partitions of $G$. The following trivial proposition has been used by Chow \cite[Proposition 2]{MR1375952}.
\begin{prop}\label{prop:incpath}Let $P$ be a poset. Then
$\Pi_P=X_{\mathrm{inc}(P)}$.
\end{prop}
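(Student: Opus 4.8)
The plan is to exhibit a type-preserving bijection between the index set over which $\Pi_P$ is summed and the index set over which $X_{\mathrm{inc}(P)}$ is summed; once that is in hand the equality of the two symmetric functions is immediate, since both are sums of the augmented monomials $\tilde{m}_{t(\cdot)}$ over their respective index sets, with matching subscripts.

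First I would unwind what a path cover of the digraph $P$ is. A directed path on $P$ is a sequence $v_1,v_2,\ldots,v_k$ with $v_i<v_{i+1}$ in $P$ for each $i$; by transitivity the underlying vertex set $\{v_1,\ldots,v_k\}$ is a chain of $P$, and conversely every chain admits exactly one such directed path, namely its increasing arrangement. Hence forgetting the internal linear order is a bijection between path covers $E\subseteq P$ and partitions of $V$ into chains, and under it the type $t(E)$ is precisely the integer partition recording the sizes of the chains.

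Next I would identify the stable sets of $\mathrm{inc}(P)$. A set $S$ is stable exactly when $\mathrm{inc}(P)$ has no edge inside $S$, that is, when no two elements of $S$ are incomparable; equivalently, every two elements of $S$ are comparable, so $S$ is a chain of $P$. Therefore the stable partitions $\sigma\vdash V$ of $\mathrm{inc}(P)$ are exactly the partitions of $V$ into chains, and $t(\sigma)$ again records the chain sizes.

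Combining the two observations, the forgetful map sending a path cover to its underlying chain partition is a bijection from the index set of $\Pi_P$ onto the index set of $X_{\mathrm{inc}(P)}$, and it preserves type. Summing $\tilde{m}_{t(\cdot)}$ over the two sides therefore yields the same symmetric function, giving $\Pi_P=X_{\mathrm{inc}(P)}$. There is no genuine obstacle here beyond the bookkeeping; the only point deserving a moment of care is the uniqueness of the directed path supported on a given chain, as this is exactly what makes the forgetful map a bijection rather than merely a surjection.
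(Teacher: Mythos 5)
Your proposal is correct and follows essentially the same route as the paper: both identify path covers of $P$ with partitions of $V$ into chains and observe that chains of $P$ are exactly the stable sets of $\mathrm{inc}(P)$, so the two defining sums coincide term by term. Your version merely spells out the uniqueness of the increasing arrangement of a chain, which the paper leaves implicit.
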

\begin{proof}
A path cover of $P$ is a partition of the vertex set into chains (fully ordered subsets). But a chain in $P$ is precisely a stable subset in $\mathrm{inc}(P)$.
\end{proof}

\begin{cor}\label{cor:eco}The coefficient of $e_\lambda$ in $X_{\mathrm{inc}(P)}$ is
\[
\sum_{\substack{E\subseteq\overline{P}\\t(E)=\lambda}}\mathrm{det}(\overline{P}/E),
\]
where the sum is over all weakly decreasing path covers of $P$ of type $\lambda$ and $\mathrm{det}(\overline{P}/E)$ is the determinant of the adjacency matrix of $\overline{P}/E$.
\end{cor}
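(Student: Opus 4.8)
The plan is to assemble the corollary from three facts already established: Proposition \ref{prop:incpath}, which identifies $X_{\mathrm{inc}(P)}$ with the path symmetric function $\Pi_P$; the equality $\Pi_{\overline{P}}=Z_{\overline{P}}$ supplied by Corollary \ref{thm:peqc1} (applicable because the digraph $P$ is acyclic); and the $h$-expansion of the cycle symmetric function given in Part 2 of Theorem \ref{thm:zexp}. The ingredient that ties these together is the involution $\omega$, together with the relation $\omega(\Pi_P)=\Pi_{\overline{P}}$ recorded in the proof of Theorem \ref{thm:peqc2}.

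First I would rewrite the quantity we are after. By Proposition \ref{prop:incpath} the coefficient of $e_\lambda$ in $X_{\mathrm{inc}(P)}$ is the coefficient of $e_\lambda$ in $\Pi_P$. Since $\omega$ is a linear involution on symmetric functions sending $e_\mu$ to $h_\mu$, applying it to the $e$-expansion $\Pi_P=\sum_\mu a_\mu e_\mu$ gives $\omega(\Pi_P)=\sum_\mu a_\mu h_\mu$; hence the coefficient of $e_\lambda$ in $\Pi_P$ equals the coefficient of $h_\lambda$ in $\omega(\Pi_P)$. This is the step that converts an $e$-coefficient question about $P$ into an $h$-coefficient question about $\overline{P}$, which is exactly the shape handled by Theorem \ref{thm:zexp}.

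Next I would identify $\omega(\Pi_P)$. Using $\omega(\Pi_P)=\Pi_{\overline{P}}$ and then Corollary \ref{thm:peqc1} applied to the acyclic digraph $D=P$, we obtain $\omega(\Pi_P)=\Pi_{\overline{P}}=Z_{\overline{P}}$. Now Part 2 of Theorem \ref{thm:zexp}, applied to the (not necessarily acyclic) digraph $\overline{P}$, states that the coefficient of $h_\lambda$ in $Z_{\overline{P}}$ is $\sum_{E\subseteq\overline{P},\,t(E)=\lambda}\det(\overline{P}/E)$, the sum being over path covers of $\overline{P}$ of type $\lambda$. By the definition adopted just before the statement, a path cover of $\overline{P}$ is precisely a weakly decreasing path cover of $P$, so this sum is exactly the claimed formula, completing the argument.

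Since every link in the chain is a result proved earlier in the paper, I do not anticipate a genuine obstacle; the proof is essentially a bookkeeping composition of known identities. The only point requiring care is the use of $\omega$ to trade $e$-coefficients of $\Pi_P$ for $h$-coefficients of $Z_{\overline{P}}$: one should verify that this passage introduces no extra sign, which it does not, because $\omega(e_\mu)=h_\mu$ appears with coefficient $+1$. This is precisely what accounts for the sign-free shape of the final formula, in contrast to the $e$-expansion of $Z_D$ in Part 1 of Theorem \ref{thm:zexp}, which carries the prefactor $(-1)^{n-\ell(\lambda)}$.
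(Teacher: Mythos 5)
Your proposal is correct and follows essentially the same route as the paper: both reduce the claim to $X_{\mathrm{inc}(P)}=\Pi_P=\omega(Z_{\overline{P}})$ via Proposition \ref{prop:incpath}, the identity $\omega(\Pi_D)=\Pi_{\overline{D}}$, and Corollary \ref{thm:peqc1}, and then invoke Part 2 of Theorem \ref{thm:zexp} for $\overline{P}$. The only (immaterial) difference is that you trade the $e_\lambda$-coefficient of $\Pi_P$ for the $h_\lambda$-coefficient of $\omega(\Pi_P)$ explicitly, whereas the paper leaves that bookkeeping implicit.
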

\begin{proof}
By \cite[Corollary 2]{MR1375952}, $\omega(\Pi_D)=\Pi_{\overline{D}}$ (see also Section \ref{sec:pathrec} below). Hence
\[
X_{\mathrm{inc}(P)}=\Pi_P\stackrel{,}{=}\omega(\Pi_{\overline{P}})\stackrel{,}{=}\omega(Z_{\overline{P}}).
\]
\end{proof}

\subsection{The shatter bijection}

Let $P$ be a poset. In this section we will establish a bijection between weakly decreasing Hamiltonian paths on $P$ and acyclic orientations of $\mathrm{inc}(P)$. The map we will describe and call the \emph{shatter bijection} is familiar in trace theory (see~\cite{MR1075995}) as giving a correspondence between the \emph{lexicographic normal form} and the \emph{dependence graph} of a trace. Our contribution amounts to saying that, for the incomparability graph of a poset,  the lexicographic normal form derived from any extension of the poset to a total order is unique.

Suppose $F$ is a weakly decreasing Hamiltonian path. We construct an acyclic orientation $r(F)$ of ${\mathrm{inc}(P)}$ as follows. Let $v_1$ be the first vertex of $F$. Direct all edges of ${\mathrm{inc}(P)}$ adjacent to $v_1$ toward $v_1$. Now let $v_2$ be the second vertex of $F$, and again direct all not yet directed edges of ${\mathrm{inc}(P)}$ adjacent to $v_2$ toward $v_2$. Continue in this manner until all edges have been directed (see Figure \ref{shatterfig}, where the acyclic orientation $A$ is the image under $r$ of the weakly decreasing Hamiltonian path labeled $s(A)$).

Note that we could also define $r(F)$ by starting with \emph{last} vertex of $F$, and at each step directing \emph{all} edges adjacent to $v_i$ toward $v_i$, not just the not yet directed edges.

Now suppose $A$ is an acyclic orientation of ${\mathrm{inc}(P)}$. We construct a weakly decreasing Hamiltonian path $s(A)$ as follows. Let $S_1$ be the set of sinks in $A$ ($S_1$ is nonempty because $A$ is acyclic). Since there cannot be arrows between sinks, $S_1$ is totally ordered by $P$. Let $v_1$ be the greatest element of $S_1$. Now let $S_2$ be the set of sinks in $A|_{V\setminus v_1}$, and let $v_2$ be the greatest element of $S_2$. Continuing this process, with $S_i$ being the set of sinks in $A|_{V\setminus\{v_1,v_2,\ldots,v_{i-1}\}}$, until the vertex set is exhausted, we obtain a Hamiltonian path with vertex sequence $v_1, v_2, \ldots, v_n$. We call $s(A)$ the \emph{shatter-path} of $A$ (see Figure \ref{shatterfig}).

To show that $s(A)$ is weakly decreasing, it will suffice to show that in the above construction, $v_1$ is not less than $v_2$. So suppose $v_1<v_2$. Then $v_2$ must not be a sink in $A$; otherwise it would have been chosen as $v_1$. Since $v_2$ \emph{is} a sink in $A|_{V\setminus v_1}$, the deletion of $v_1$ must have destroyed an arrow pointing from $v_2$ to $v_1$. Hence there must be an edge in ${\mathrm{inc}(P)}$ between $v_1$ and $v_2$, a contradiction to the assumption that $v_1<v_2$.

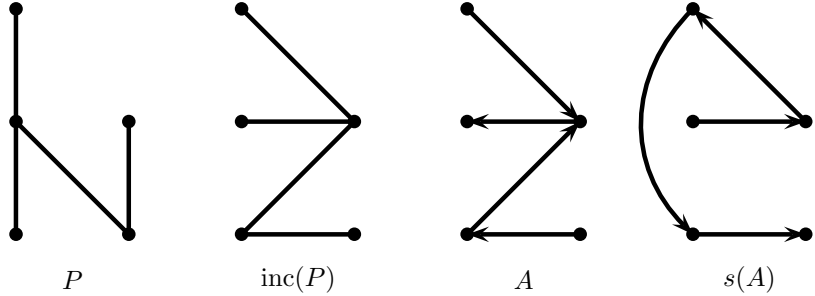
\begin{figure}
\psset{unit=3cm}
\begin{pspicture}(-0.2,-0.3)(3,1.2)
\psset{dotsize=0.06}
\psset{linewidth=0.02}
\psset{arrowscale=1.2}

\psdots(0.0,0.0)
\psdots(0.0,0.5)
\psdots(0.0,1.0)
\psdots(0.5,0.0)
\psdots(0.5,0.5)

\psline{-}(0.0,0.0)(0.0,0.5)
\psline{-}(0.0,0.5)(0.0,1.0)
\psline{-}(0.5,0.0)(0.5,0.5)
\psline{-}(0.5,0.0)(0.0,0.5)

\rput(0.25,-0.20){$P$}

\psset{origin={-1.0,0}}
\psdots(0.0,0.0)
\psdots(0.0,0.5)
\psdots(0.0,1.0)
\psdots(0.5,0.0)
\psdots(0.5,0.5)

\psline{-}(0.5,0.0)(0.0,0.0)
\psline{-}(0.0,0.0)(0.5,0.5)
\psline{-}(0.5,0.5)(0.0,0.5)
\psline{-}(0.0,1.0)(0.5,0.5)

\rput(1.25,-0.20){${\mathrm{inc}(P)}$}

\psset{origin={-2.0,0}}
\psdots(0.0,0.0)
\psdots(0.0,0.5)
\psdots(0.0,1.0)
\psdots(0.5,0.0)
\psdots(0.5,0.5)

\psline{->}(0.5,0.0)(0.0,0.0)
\psline{->}(0.0,0.0)(0.5,0.5)
\psline{->}(0.5,0.5)(0.0,0.5)
\psline{->}(0.0,1.0)(0.5,0.5)

\rput(2.25,-0.20){$A$}

\psset{origin={-3.0,0}}
\psdots(0.0,0.0)
\psdots(0.0,0.5)
\psdots(0.0,1.0)
\psdots(0.5,0.0)
\psdots(0.5,0.5)

\psline{->}(0.0,0.5)(0.5,0.5)
\psline{->}(0.5,0.5)(0.0,1.0)
\psbezier{->}(0.0,1.0)(-0.3,0.7)(-0.3,0.3)(0.0,0.0)
\psline{->}(0.0,0.0)(0.5,0.0)

\rput(3.25,-0.20){$s(A)$}

\end{pspicture}
\caption{The Hasse diagram of a poset $P$, its incomparability graph ${\mathrm{inc}(P)}$, an acyclic orientation of ${\mathrm{inc}(P)}$, and its weakly decreasing shatter-path.}\label{shatterfig}
\end{figure} 

\begin{thm}\label{patho}
The map $r$ is a bijection between weakly decreasing Hamiltonian paths on $P$ and acyclic orientations of ${\mathrm{inc}(P)}$, with inverse $s$.
\end{thm}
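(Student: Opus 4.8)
The plan is to show that $r$ and $s$ are mutually inverse maps between the two finite sets, which (since both maps are already well-defined, as established above) will establish the bijection. The essential work is to verify the two composition identities $s \circ r = \mathrm{id}$ and $r \circ s = \mathrm{id}$, or alternatively to check one composition is the identity and argue injectivity or a cardinality match. I would aim to prove both compositions directly, since each direction illuminates the geometric relationship between the greedy sink-peeling that defines $s$ and the greedy edge-orienting that defines $r$.

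First I would set up the key structural observation underlying both maps: in the orientation $r(F)$, the vertex $v_1$ (the first vertex of $F$) becomes a \emph{sink}, since every edge of $\mathrm{inc}(P)$ adjacent to $v_1$ is directed toward $v_1$ at the first step. More generally, I would establish by induction that after recording $v_1, \ldots, v_{i-1}$, the vertex $v_i$ is a sink of $r(F)|_{V \setminus \{v_1, \ldots, v_{i-1}\}}$, because every edge remaining adjacent to $v_i$ in the restricted graph is oriented toward $v_i$ by the time it is processed. This matches exactly the sink-peeling that defines $s$, so the vertex sequence produced by $s(r(F))$ will agree with that of $F$ \emph{provided} that at each stage $v_i$ is the \emph{greatest} available sink. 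This is the crux: I must show that the first vertex $v_i$ of the remaining path $F$ is not only a sink of the restricted orientation but the greatest one. Here I would invoke the weakly decreasing hypothesis on $F$ together with the alternate description of $r$ noted in the text (directing \emph{all} edges adjacent to $v_i$ toward $v_i$): if some sink $u > v_i$ existed in the restriction, then $u$ and $v_i$ must be incomparable or else $u$ would have its edge to $v_i$ pointing the wrong way, and a careful comparison with the order in which $r$ processes vertices forces a contradiction with $v_i$ appearing first in $F$.

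For the reverse composition $r(s(A)) = A$, I would argue that the shatter-path $s(A)$, when fed to $r$, reconstructs the original orientation of each edge. Given an edge $\{v_i, v_j\}$ of $\mathrm{inc}(P)$ with $i < j$ in the shatter-path, $r$ orients it toward $v_i$ (the earlier vertex), so it suffices to show that in $A$ this edge is also directed toward $v_i$. Since $v_i$ was selected as a sink of $A|_{V \setminus \{v_1, \ldots, v_{i-1}\}}$ while $v_j$ survives into that restriction, the edge $\{v_i, v_j\}$ is present there and, $v_i$ being a sink, must point toward $v_i$ in $A$; this is exactly the orientation $r$ assigns. Thus $r(s(A)) = A$.

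The main obstacle I anticipate is the greatness claim in the $s \circ r = \mathrm{id}$ direction: proving that the first vertex of the residual path is the \emph{greatest} sink, rather than merely \emph{a} sink, of the residual orientation. The argument from \emph{weakly decreasing} in the text (used to show $s(A)$ is weakly decreasing, via $v_1 \not< v_2$) runs in the opposite direction and must be adapted; I expect to need the alternate definition of $r$ to pin down precisely which sinks can appear and to rule out a larger sink preempting $v_i$. Once this is settled, the remaining verifications are the routine inductive bookkeeping sketched above, and the theorem follows since two maps between finite sets that compose to the identity in both orders are mutually inverse bijections.
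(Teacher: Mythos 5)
Your route is sound but genuinely different from the paper's. The paper does not verify the two composites edge by edge; instead it isolates a single-step bijection $\psi(A)=(v,A|_{V\setminus v})$, where $v$ is the greatest sink of $A$, onto pairs $(v,B)$ with $B$ an acyclic orientation of $\mathrm{inc}(P)|_{V\setminus v}$ such that the arrow from $v$ to the greatest sink of $B$ is weakly decreasing, and then observes that $s$ is the $n$-fold iteration of $\psi$ while $r$ is the iteration of $\psi^{-1}$ run backward along the path. That packaging reduces everything to checking one local step; your direct computation of $s\circ r$ and $r\circ s$ instead yields an explicit description of where each arrow comes from (an edge $\{v_i,v_j\}$ with $i<j$ in the path order is always directed toward $v_i$), which is what makes your $r\circ s=\mathrm{id}$ direction essentially immediate and is a perfectly good trade.

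The one place you stop short is exactly the step you flag: that the first vertex of the residual path is the \emph{greatest} sink of $r(F)|_{\{v_i,\dots,v_n\}}$, not merely a sink. This does go through; here is the argument you are missing. Since $r$ directs each edge toward whichever endpoint occurs earlier in $F$, a vertex $v_j$ with $j>i$ is a sink of the restriction if and only if it has no neighbor $v_k$ with $i\le k<j$, i.e., if and only if $v_j$ is comparable in $P$ to every such $v_k$. Fix such a $v_j$ and suppose some $v_k$ with $i\le k<j$ satisfies $v_k<v_j$; take $k$ largest with this property. If $k=j-1$, then $v_{j-1}<v_j$ with $v_{j-1},v_j$ comparable, contradicting that $F$ is weakly decreasing. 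Otherwise $v_{k+1}$ is comparable to $v_j$ and, by maximality of $k$, not less than it, so $v_k<v_j<v_{k+1}$, again contradicting weak decrease at the step from $v_k$ to $v_{k+1}$. Hence $v_k>v_j$ for all $i\le k<j$, and in particular $v_i>v_j$, so $v_i$ is the greatest sink of the restriction. With this lemma your induction closes, both composites are the identity, and the theorem follows.
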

\begin{proof}
Let $A$ be an acyclic orientation of ${\mathrm{inc}(P)}$, with greatest sink $v$. Define $\psi(A)=(v,A|_{V\setminus v})$. It is straightforward to show that $\psi$ is a bijection between
\begin{itemize}
\item acyclic orientations of ${\mathrm{inc}(P)}$, and
\item pairs $(v, B)$, where $v\stackrel{,}{\in} V$ is a vertex, $B$ is an acyclic orientation of ${\mathrm{inc}(P)}|_{V\setminus v}$, and the arrow from $v$ to the greatest sink in $B$ is weakly decreasing.
\end{itemize}
Let $n=|V|$. Notice that the map $s$ is obtained by iterating $\psi$ on the remaining orientation: the $n$'th iteration of $\psi$ is a bijection between
\begin{itemize}
\item acyclic orientations of ${\mathrm{inc}(P)}$, and
\item sequences $(v_1, v_2, \ldots, v_n)$ of distinct, weakly decreasing vertices.
\end{itemize}
Of course, such sequences are simply weakly decreasing Hamiltonian paths. Since $\psi^{-1}(B,v)$ is obtained from $B$ by adding the vertex $v$ and directing all adjacent edges in ${\mathrm{inc}(P)}$ toward $v$, we see that $r$ is obtained by iterating $\psi^{-1}$, starting with $v$ as the last vertex of the weakly decreasing path and proceeding backward along the path toward the first vertex.
\end{proof}

\subsection{The second-sink bijection}\label{sec:ss}

A \emph{broken cycle} is a cycle with one arrow removed. So a weakly decreasing path is a broken weakly decreasing cycle if and only if the cycle obtained by adding an arrow from its end to its beginning is weakly decreasing. Corollary \ref{cor:eco} implies that the coefficient of $e_n$ in $X_{\mathrm{inc}(P)}$ is the number of broken weakly decreasing Hamiltonian cycles on $P$. For a poset $P$, let $c_P$ be the number of weakly decreasing Hamiltonian cycles on $P$. Let $v\stackrel{,}{\in} V$ be a fixed vertex. It is clear that $c_P$ is also the number of broken weakly decreasing Hamiltonian cycles beginning at $v$.

Let $a_P$ be the coefficient of $p_n$ in $X_{\mathrm{inc}(P)}$. It follows from Lemma \ref{lem:eco} that $c_P=(-1)^{n-1}a_P$. It is known that $a_P$ is the number of acyclic orientations of $\mathrm{inc}(P)$ with a unique sink at a fixed vertex (see the next section for a proof). Hence there should be a bijection between broken weakly decreasing Hamiltonian cycles beginning at $v$ and acyclic orientations of $\mathrm{inc}(P)$ with a unique sink at $v$. We call an acyclic orientation of $\mathrm{inc}(P)$ \emph{circular} if its shatter-path is a broken weakly decreasing cycle, i.e., if its smallest source is not less than its greatest sink. By Theorem \ref{patho}, such a bijection would follow from a bijection between circular acyclic orientations of $\mathrm{inc}(P)$ with greatest sink $v$ and acyclic orientations of $\mathrm{inc}(P)$ with a unique sink at $v$. Such a bijection is outlined in this section. Actually, there is a much stronger relationship between $e$-coefficients and acyclic orientations \cite[Theorems 3.3 and 3.4]{MR96b:05174} which we do not consider here; the combinatorics of this relationship deserves further investigation.

Let $A$ be an acyclic orientation. If $v$ is a sink of $A$, we can form a new acyclic orientation by reversing every arrow adjacent to $v$. This is described as \emph{flipping} the sink $v$. For more on flipping sinks, see \cite{math.CO/0209005}.

Assume $\mathrm{inc}(P)$ is connected. Let $A$ be a circular acyclic orientation of $\mathrm{inc}(P)$. The set of sinks in $A$ is totally ordered by $P$. So if $A$ has more than one sink, we can flip the second-largest sink. Let $t(A)$ be the orientation resulting from repeated flipping of the second sink until an orientation having only one sink is obtained (see Figure \ref{secondsinkfig}). Because the largest sink is never flipped, this process must terminate.

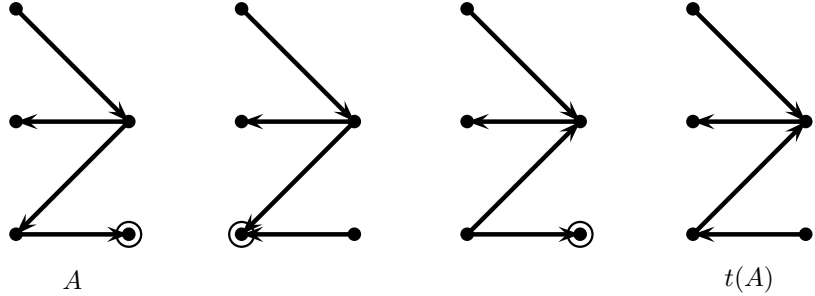
\begin{figure}
\psset{unit=3cm}
\begin{pspicture}(-0.2,-0.3)(3,1.2)
\psset{dotsize=0.06}
\psset{linewidth=0.02}
\psset{arrowscale=1.2}

\psset{origin={0,0}}
\psdots(0.0,0.0)
\psdots(0.0,0.5)
\psdots(0.0,1.0)
\psdots(0.5,0.0)
\psdots(0.5,0.5)
\pscircle[linewidth=0.01](0.5,0.0){0.06}

\psline{<-}(0.5,0.0)(0.0,0.0)
\psline{<-}(0.0,0.0)(0.5,0.5)
\psline{->}(0.5,0.5)(0.0,0.5)
\psline{->}(0.0,1.0)(0.5,0.5)

\rput(0.25,-0.20){$A$}

\psset{origin={-1.0,0}}
\psdots(0.0,0.0)
\psdots(0.0,0.5)
\psdots(0.0,1.0)
\psdots(0.5,0.0)
\psdots(0.5,0.5)
\pscircle[linewidth=0.01](0.0,0.0){0.06}

\psline{->}(0.5,0.0)(0.0,0.0)
\psline{<-}(0.0,0.0)(0.5,0.5)
\psline{->}(0.5,0.5)(0.0,0.5)
\psline{->}(0.0,1.0)(0.5,0.5)

\psset{origin={-2.0,0}}
\psdots(0.0,0.0)
\psdots(0.0,0.5)
\psdots(0.0,1.0)
\psdots(0.5,0.0)
\psdots(0.5,0.5)
\pscircle[linewidth=0.01](0.5,0.0){0.06}

\psline{<-}(0.5,0.0)(0.0,0.0)
\psline{->}(0.0,0.0)(0.5,0.5)
\psline{->}(0.5,0.5)(0.0,0.5)
\psline{->}(0.0,1.0)(0.5,0.5)

\psset{origin={-3.0,0}}
\psdots(0.0,0.0)
\psdots(0.0,0.5)
\psdots(0.0,1.0)
\psdots(0.5,0.0)
\psdots(0.5,0.5)

\psline{->}(0.5,0.0)(0.0,0.0)
\psline{->}(0.0,0.0)(0.5,0.5)
\psline{->}(0.5,0.5)(0.0,0.5)
\psline{->}(0.0,1.0)(0.5,0.5)

\rput(3.25,-0.20){$t(A)$}

\end{pspicture}
\caption{A sequence of acyclic orientations of $\mathrm{inc}(P)$, starting with a circular orientation $A$ and ending with a one-sink orientation $t(A)$. The second-largest sink is circled in each orientation.}\label{secondsinkfig}
\end{figure} 

On the other hand, let $B$ be an acyclic orientation with just one sink. If $B$ is not circular, flip the smallest source. Repeat this process until a circular orientation is obtained. Since $\mathrm{inc}(P)$ is connected, this process must terminate (we omit the proof). Let $u(B)$ be the resulting orientation.

\begin{prop}\label{prop:sec}
Let $P$ be a poset such that $\mathrm{inc}(P)$ is connected, and let $v\stackrel{,}{\in} V$ be a vertex. Then the function $t$ is a bijection between
\begin{itemize}
\item
circular acyclic orientations of $\mathrm{inc}(P)$ with greatest sink $v$, and
\item
acyclic orientations of $\mathrm{inc}(P)$ with a unique sink at $v$,
\end{itemize}
with inverse $u$.
\end{prop}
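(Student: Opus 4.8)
The plan is to show that the two processes defining $t$ and $u$ are step-by-step reverses of one another, so that it suffices to analyze a single flip of each kind. Call flipping the second-largest sink a \emph{$t$-step} (defined when there are at least two sinks) and flipping the smallest source a \emph{$u$-step} (defined when the orientation is not circular). Recall that flipping a sink or a source never creates a directed cycle, and that, since sinks (resp.\ sources) are pairwise nonadjacent in $\mathrm{inc}(P)$, they are pairwise comparable in $P$ and hence totally ordered; this is what makes ``second-largest sink'' and ``smallest source'' well defined at every stage. First I would check that a $t$-step keeps $v$ the greatest sink. The vertex $v$ is nonadjacent to the flipped sink $w$, so it remains a sink; and any newly created sink $x$ is a neighbor of $w$, hence incomparable to $w$. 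Such an $x$ cannot be incomparable to $v$ (two sinks are never adjacent), and it cannot satisfy $x>_P v$ (that would give $w<_P v<_P x$, making $x,w$ comparable), so $x<_P v$. Thus $v$ survives as the greatest sink, and dually a $u$-step, which flips a source $s<_P v$ necessarily nonadjacent to $v$, keeps $v$ the greatest sink.

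The heart of the argument is a single invariant. Say an orientation with greatest sink $v$ is \emph{admissible} if no source lies below the second-largest sink in $P$ (vacuously true when $v$ is the unique sink). I would prove three things: (a) every circular orientation and every unique-sink orientation is admissible — for a circular $A$, a source $s$ with $s<_P z$ for some sink $z\ne v$ would give $s<_P z<_P v$, contradicting $s\not<_P v$; (b) a $t$-step makes the flipped sink $w$ the \emph{smallest} source of the new orientation, since every other source there is an old source nonadjacent to $w$, hence comparable to $w$, and cannot lie below $w$ by admissibility, so lies above $w$ — dually, a $u$-step makes the flipped source the second-largest sink; and (c) both kinds of step preserve admissibility, the only delicate point being that no old source can lie below a freshly created sink, which follows from a short case check using the dictionary between incomparability in $P$ and adjacency in $\mathrm{inc}(P)$. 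Granting (b), the fact that flipping is an involution immediately yields that on admissible orientations a $t$-step and the ensuing $u$-step undo each other, and symmetrically: flipping $w$ into a source and then flipping back that now-smallest source recovers the original.

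With these single-step facts I would assemble the bijection. A $t$-step always lands in a non-circular orientation (its smallest source $w$ satisfies $w<_P v$), and a $u$-step always lands in an orientation with at least two sinks; moreover, by (b), each is injective on admissible orientations, being inverted by the other. A circular orientation is never the image of a $t$-step, so it has no predecessor in the $t$-process; by injectivity the iterates are therefore distinct, the process cannot cycle, and being finite it terminates at a unique-sink orientation whose sink is the invariant greatest sink $v$. The same argument, dualized (a unique-sink orientation is never the image of a $u$-step), gives termination of $u$ at a circular orientation with greatest sink $v$. Finally, running $t$ on a circular $A$ produces a chain of admissible orientations, and running $u$ on $t(A)$ reverses each $t$-step one at a time, halting exactly when it first meets a circular orientation, namely $A$ itself; hence $u\circ t=\mathrm{id}$, and symmetrically $t\circ u=\mathrm{id}$.

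The main obstacle is the admissibility package (b)–(c): verifying that the flipped second-largest sink becomes precisely the smallest source (and dually) and that admissibility survives each flip. Both rest on the identification of adjacency with incomparability together with the admissibility hypothesis, which is exactly what excludes a source sitting below a newly created sink; the pairwise comparability of sinks and of sources is what keeps ``smallest'' and ``second-largest'' meaningful throughout. I note that termination of $u$, which the informal description leaves unproved, drops out of this no-cycling/injectivity argument and does not require a separate connectivity analysis.
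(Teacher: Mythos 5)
Your proof is correct and follows the same route as the paper's: both rest on the paired key observations that a flipped second-largest sink becomes the smallest source and a flipped smallest source becomes the second-largest sink, so that the $t$- and $u$-processes are step-by-step inverses of one another. The paper's write-up of these observations is deliberately informal (the inductive continuation is indicated only by ``As before\ldots And so on''); your admissibility invariant --- no source lies below the second-largest sink --- is exactly the induction hypothesis needed to push that sketch through, and your verifications (a)--(c), together with the preliminary check that $v$ remains the greatest sink under both kinds of flip, are sound. The one place you genuinely diverge is termination: the paper asserts that the $u$-process terminates ``since $\mathrm{inc}(P)$ is connected'' and omits the proof, whereas you derive termination of both processes from injectivity of the single steps combined with the facts that a circular orientation is never the image of a $t$-step and a unique-sink orientation is never the image of a $u$-step. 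That no-cycling argument is valid and arguably cleaner, since it treats the two directions symmetrically and dispenses with any separate connectivity analysis (connectedness then matters only for the nonvacuousness of the two sets being matched).
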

\begin{proof}
Let $A$ be a circular acyclic orientation of $\mathrm{inc}(P)$. The key observation is that each time the second-largest sink is flipped, it becomes the smallest source. To see this, let $a$ be the largest sink, $v$ the smallest source, and $w$ the second-largest sink. Let $u$ be any source (other than $w$) in the orientation after $w$ is flipped (if $w$ is the only source, we have nothing to prove). Then $u$ is also a source of $A$. If $w>u$, then $w>v$ because $u>v$. But $w<a$, so this would imply $v<a$, in contradiction to being circular. Now let $w^\prime$ be the new second-largest sink. As before, $w^\prime$ must be less than any source of $A$ except possibly $w$. If $w^\prime$ were greater than $w$, then $w^\prime$ would be the second-largest sink of $A$, not $w$. If $w^\prime$ is incomparable to $w$, then $w$ is no longer a source after $w^\prime$ is flipped. And so on.

Let $B$ be an acyclic orientation with just one sink. The key observation here is that each time the smallest source is flipped, it becomes the second-largest sink. To see this, let $a$ be the largest sink, and $v$ the smallest source, and $v^\prime$ the smallest source after $v$ is flipped. If $v^\prime$ were less than $v'$, then $v^\prime$ would be the smallest source of $A$, not $v$. And so on.
\end{proof}

This proposition illustrates why the number of acyclic orientations of an incomparability graph with a unique sink at a fixed vertex does not depend on the choice of a fixed vertex: a broken cycle can be cycled to a broken cycle beginning at any vertex.

Note that using Theorem \ref{patho} we could have described this as a bijection between broken weakly decreasing Hamiltonian cycles beginning at $v$ and weakly decreasing Hamiltonian paths beginning at $v$ with no vertex less than all previous vertices.

\section{Stable link sequences and acyclic orientations}
 
For a graph $G$, let $a_G$ be the coefficient of $p_n$ in its chromatic symmetric function $X_G$. So the $a_P$ of the previous section is now $a_{\mathrm{inc}(P)}$. It is known that $(-1)^{n-1}a_G$ is the number of acyclic orientations of $G$ with a unique sink at a fixed vertex. This result was first proved in \cite{MR712251} and was given more directly combinatorial proofs in \cite{MR1778205}. In this section we will give another proof, based on a stronger result.

Let $S$ be a subset of $V$. We say that a graph (undirected) with vertex set $V$ is \emph{anchored by $S$} if for every $v\stackrel{,}{\in} V$ there is a path between $v$ and some element of $S$. We define a function $j_S$ from graphs on $V$ anchored by $S$ to ordered set partitions of $V$ as follows. Let $G$ be a graph on $V$ anchored by $S$. Begin by setting $\sigma_1=S$. If $\sigma_1$ is not equal to $V$, let $\sigma_2$ be the subset of $V\setminus\sigma_1$ of vertices adjacent to some element of $\sigma_1$. If $\sigma_1\cup\sigma_2$ is not equal to $V$, let $\sigma_3$ be the subset of $V\setminus\sigma_1\setminus\sigma_2$ of vertices adjacent to some element of $\sigma_2$. Continuing in this manner until every vertex is contained in some $\sigma_i$, we obtain an ordered set partition $\sigma\stackrel{,}{=}(\sigma_1, \sigma_2, \ldots, \sigma_r)$ of $G$. Set $j_S(G)=\sigma$.

The following proposition states that choosing a graph $G$ such that $j_{\sigma_1}(G)=\sigma$ is equivalent to choosing, for each $i$, a set (possibly empty) of edges with both ends in $\sigma_i$, and for each $v\stackrel{,}{\in} V\setminus\sigma_1$, a nonempty set of edges linking $v$ to the block of $\sigma$ preceding the one containing $v$. The simple proof is omitted.

\begin{prop}Let $G$ be a graph on $V$ anchored by $\sigma_1$ and $\sigma\stackrel{,}{=}(\sigma_1, \sigma_2, \ldots, \sigma_r)$ an ordered set partition of $V$. Then the following are equivalent.
\begin{itemize}
\item $j_{\sigma_1}(G)=\sigma$.
\item The edge set of $G$ is the union of the elements of two families of sets, $(A_i)_{i=1}^r$ and $(B_v)_{v\in V\setminus \sigma_1}$, where
\begin{itemize}
\item $A_i$ is a set of edges with both ends in $\sigma_i$, and
\item $B_v$ is a nonempty set of edges linking $v$ to $\sigma_{s-1}$, where $\sigma_s$ is the block of $\sigma$ containing $v$.
\end{itemize}
\end{itemize}
\end{prop}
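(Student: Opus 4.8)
\emph{Overview.} The plan is to recognize the construction of $j_{\sigma_1}$ as ordinary breadth-first search from the anchor set, so that $\sigma_i$ is exactly the set of vertices at graph distance $i-1$ from $\sigma_1$, and then to prove the two implications separately. The fact underlying both directions is the \emph{no-skip property}: whenever $j_{\sigma_1}(G)=\sigma$, every edge of $G$ either has both ends in a single block $\sigma_i$ or joins a vertex of $\sigma_{s-1}$ to a vertex of $\sigma_s$ for some $s$.

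\emph{Forward direction.} Assuming $j_{\sigma_1}(G)=\sigma$, I would first verify the no-skip property: if an edge joined $u\in\sigma_i$ to $w\in\sigma_j$ with $j\ge i+2$, then at stage $i+1$ of the construction the unplaced vertex $w$, being adjacent to $u\in\sigma_i$, would have been put into $\sigma_{i+1}$, contradicting $w\in\sigma_j$. Granting this, I set $A_i$ to be the edges with both ends in $\sigma_i$, and for each $v\in\sigma_s$ with $s\ge 2$ I set $B_v$ to be the edges joining $v$ to $\sigma_{s-1}$. Every edge then lies in exactly one of these sets: an intra-block edge lies in the appropriate $A_i$, while an edge between consecutive blocks is recorded in $B_v$ for its endpoint $v$ in the \emph{higher}-indexed block. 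Finally $B_v\ne\emptyset$, since $v$ entered $\sigma_s$ precisely because it is adjacent to some vertex of $\sigma_{s-1}$; this produces the required decomposition.

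\emph{Backward direction.} Assuming the edge set is decomposed as $\bigcup_i A_i\cup\bigcup_v B_v$ with the stated properties, I would show by induction on $s$ that the $s$-th stage of the $j_{\sigma_1}$-construction yields exactly $\sigma_s$. The base case $s=1$ holds since the construction begins by setting the first block equal to the anchor $\sigma_1$. For the step, suppose stages $1,\dots,s-1$ have produced $\sigma_1,\dots,\sigma_{s-1}$; the next stage collects those $v\in V\setminus(\sigma_1\cup\cdots\cup\sigma_{s-1})$ adjacent to $\sigma_{s-1}$. Each $v\in\sigma_s$ qualifies, since $B_v$ is nonempty and joins $v$ to $\sigma_{s-1}$. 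Conversely, if $v\notin\sigma_1\cup\cdots\cup\sigma_{s-1}$ is adjacent to some $w\in\sigma_{s-1}$, the edge $\{v,w\}$ cannot lie in any $A_i$ (its ends are in different blocks), so it lies in some $B_u$ with $u\in\sigma_t$; either $u=v$ and $w\in\sigma_{t-1}$, forcing $t=s$ and $v\in\sigma_s$, or $u=w$ and $v\in\sigma_{t-1}$, which would place $v$ in a block of index at most $s-1$, contradicting the choice of $v$. Thus the new stage is exactly $\sigma_s$, completing the induction and hence the proof.

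\emph{Main obstacle.} There is no genuine difficulty here; the argument is essentially bookkeeping, which is presumably why the author omits it. The only points requiring care are the assignment of each inter-block edge to the $B_v$ of its higher-indexed endpoint in the forward direction, and the elimination of the spurious case $u=w$ in the backward direction.
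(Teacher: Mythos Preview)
Your argument is correct and complete; the breadth-first-search interpretation, the no-skip property, and the induction for the converse all go through as you describe. The paper itself omits the proof entirely (``The simple proof is omitted''), so there is no approach to compare against.
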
 

Let $\eta_G(t)$ be  the polynomial whose coefficient of $t^k$ is the number of connected spanning subgraphs of $G$ with $k$ edges (recall that a spanning subgraph is one with the same vertex set). The subgraph expansion for the chromatic symmetric function \cite[Theorem 2.5]{MR96b:05174} implies that $a_G=\eta_G(-1)$.

Let $v_0\stackrel{,}{\in} V$ be a fixed vertex. Observe that for a graph $G$ to be anchored by $S\stackrel{,}{=}\{v_0\}$ is equivalent to $G$ being connected. If $\sigma$ is an ordered set partition of $V$ and $v\in V\setminus\sigma_1$, let $h_G(\sigma,v)$ be the number of edges in $G$ between $v$ and the block of $\sigma$ preceding the block containing $v$. The previous proposition yields
\[
\eta_G(t)=
\sum_{\substack{\sigma\\\sigma_1=\{v\}}}\quad
\prod_{i=1}^{\ell(\sigma)}(1+t)^{|E(G|_{\sigma_i})|}
\prod_{v\in V\setminus\{v\}}[(1+t)^{h_G(\sigma,v)}-1],
\]
where the sum is over all ordered set partitions $\sigma$ such that $\sigma_1=\{v\}$, and $\ell(\sigma)$ is the number of blocks. In particular, $(-1)^{|V|-1}\eta_G(-1)$ is the number of ordered set partitions satisfying $\sigma_1=\{v\}$ and the condition of the following definition. Recall that a stable subset $S$ of $V$ is one such that $G|_S$ contains no edges.

\begin{defn}Let $G$ be a graph on $V$. We call an ordered set partition $\sigma$ of $V$ a \emph{stable link sequence} of $G$ if each $\sigma_i$ is a stable set of $G$ and if for each $v\stackrel{,}{\in} V\setminus\sigma_1$, there is an edge in $G$ between $v$ and some element of the block of $\sigma$ preceding the block containing $v$.
\end{defn}

Let $G$ be a graph on $V$ and $A$ an acyclic orientation of $G$. An ordered set partition $f(A)$ of $V$, called the \emph{sink sequence} of $A$, can be constructed as follows. Let $\sigma_1$ be the set of sinks in $A$. When these sinks are removed (along with all incident edges), we are left with a new acyclic digraph $A_2\stackrel{,}{=}A|_{V\setminus\sigma_1}$. Let $\sigma_2$ be the set of sinks in $A_2$. Since an acyclic digraph must have at least one sink, this process may be continued until every vertex is contained in some block of $\sigma$.

Notice that if $\sigma_1$ is the set of sinks in $A$ and $\sigma_2$ is the set of sinks in $A_2\stackrel{,}{=}A|_{V\setminus\sigma_1}$, then for each $v\stackrel{,}{\in}\sigma_2$, $A$ has an edge between $v$ and some element of $\sigma_1$. If not, $v$ would be a sink of $A$ rather than $A_2$. Since the set of sinks in an acyclic orientation is necessarily stable, it follows that $f(A)$ is a stable link sequence.

\begin{thm}\label{thm:sls}The map $f$ from acyclic orientations of $G$ to stable link sequences of $G$ is a bijection.
\end{thm}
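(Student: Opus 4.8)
The plan is to show that $f$ is a bijection by constructing an explicit inverse, which I will call $g$, taking a stable link sequence back to an acyclic orientation. Given a stable link sequence $\sigma=(\sigma_1,\sigma_2,\ldots,\sigma_r)$, the natural reconstruction is to orient each edge of $G$ so that it points from the later block to the earlier block: if an edge joins $v\in\sigma_i$ and $w\in\sigma_j$ with $i<j$, direct it from $w$ toward $v$; edges within a single block cannot occur because each $\sigma_i$ is stable. First I would verify that this orientation $g(\sigma)$ is acyclic: any directed arrow strictly decreases the block index of its head relative to its tail, so a directed cycle would force a strict decrease returning to its start, which is impossible.

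Next I would check that $f$ and $g$ are mutually inverse. For $f(g(\sigma))=\sigma$, I would argue by induction on the block index that the sinks of $g(\sigma)$ are exactly $\sigma_1$, the sinks of $g(\sigma)|_{V\setminus\sigma_1}$ are exactly $\sigma_2$, and so on. A vertex $v\in\sigma_i$ has all its arrows pointing toward blocks of smaller index, so once $\sigma_1,\ldots,\sigma_{i-1}$ are removed it has no outgoing arrows and is a sink; conversely the stable link sequence condition guarantees that $v\in\sigma_i$ (with $i\geq2$) has an edge to $\sigma_{i-1}$, hence an outgoing arrow in $g(\sigma)$, so $v$ is \emph{not} a sink until $\sigma_{i-1}$ is deleted. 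This is precisely what is needed to recover $\sigma$ as the sink sequence. For the other direction, $g(f(A))=A$, I would observe that if $A$ is acyclic with sink sequence $(\sigma_1,\ldots,\sigma_r)$, then every edge of $A$ runs from a higher-indexed block to a lower-indexed one: an edge between $\sigma_i$ and $\sigma_j$ with $i<j$ must be oriented from the $\sigma_j$-endpoint to the $\sigma_i$-endpoint, since otherwise the $\sigma_i$-endpoint would not become a sink at stage $i$. Hence $A$ agrees with $g(f(A))$ arrow by arrow.

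The main obstacle I expect is the direction $g(f(A))=A$, specifically the claim that in the sink sequence of an acyclic orientation every edge points from the later block to the earlier block. This requires a careful argument: I must rule out an edge oriented from $\sigma_i$ to $\sigma_j$ with $i<j$. The key point is that when we reach stage $i$ in constructing $f(A)$, the vertex in $\sigma_i$ has had all vertices of $\sigma_1,\ldots,\sigma_{i-1}$ removed and becomes a sink, meaning all its remaining arrows were incoming; an arrow pointing into $\sigma_j$ (with $j>i$) would still be present at that stage and would prevent it from being a sink. Establishing this cleanly, together with the observation (already noted in the excerpt) that $f(A)$ is indeed a stable link sequence, is the crux; the acyclicity of $g(\sigma)$ and the verification $f(g(\sigma))=\sigma$ are comparatively routine inductions on the block index.
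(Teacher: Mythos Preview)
Your proposal is correct and is essentially the same argument as the paper's: the paper phrases it as iterating a one-step bijection $\psi\colon A\mapsto(S,A|_{V\setminus S})$ (with $S$ the set of sinks), whose inverse reinserts $S$ and directs all edges between $S$ and $V\setminus S$ toward $S$; iterating $\psi^{-1}$ yields exactly your map $g$. Your direct verification of $f\circ g=\mathrm{id}$ and $g\circ f=\mathrm{id}$ spells out the details that the paper's ``we omit the simple proof'' elides.
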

\begin{proof}We will define a map $\psi$ from acyclic orientations $A$ of $G$ to pairs $(S,A^\prime)$, where
\begin{itemize}
\item$S$ is a stable subset of $V$,
\item$A^\prime$ is an acyclic orientation of $G|_{V\setminus S}$, and
\item there is an edge in $G$ between every sink in $A^\prime$ and some element of $S$.
\end{itemize}
This map is defined in the obvious way: $\psi(A)=(S,A|_{V\setminus S})$, where $S$ is the set of sinks in $A$. Since $f$ is obtained by judiciously iterating $\psi$, it will suffice to show that $\psi$ is a bijection. We omit the simple proof.
\end{proof}
In the language of traces or partially commutative monoids, this theorem is a well known special case of the correspondence between the \emph{Foata normal form} and the \emph{dependence graph} of a given trace (see~\cite{MR1075995}).

As a consequence of this theorem, we have that $(-1)^{|V|-1}\eta_G(-1)$ is the number of acyclic orientations of $G$ with a unique sink at a fixed vertex. This is the result that was mentioned at the beginning of this section.

\section{Path Reciprocity}\label{sec:pathrec}

The purpose of this section is to outline a proof that for any digraph $D$,
\begin{equation}\label{eq:prec}
\omega(\Pi_D)=\Pi_{\overline{D}}.
\end{equation}
Chow gives three proofs of \eqref{eq:prec} in \cite[Corollary 2]{MR1375952}, the first using his reciprocity theorem, the second attributed to Gessel and based on a result of Carlitz, Scoville, and Vaughan \cite[Theorem 7.3]{MR0432472}, and the third using quasisymmetric functions.

The following specialization of \cite[Theorem 7.3]{MR0432472} is the combinatorial essence of \eqref{eq:prec}. Recall that $\pi_D$ is the number of Hamiltonian paths on $D$. The notation $S\uplus T=V$ means $S\cup T=V$ and $S\cap T=\emptyset$.
\begin{prop}\label{prop:pathsum}Let $D$ be a digraph with nonempty vertex set $V$. Then
\begin{equation}\label{eq:pathsum}
\sum_{S\uplus T=V}(-1)^{|T|}\pi_{D|_S}\pi_{\overline{D}|_T}=0.
\end{equation}
\end{prop}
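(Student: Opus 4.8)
The plan is to prove \eqref{eq:pathsum} by exhibiting a fixed-point-free, sign-reversing involution on the set of signed objects the sum enumerates. Expanding the products, the left-hand side counts, weighted by $(-1)^{|T|}$, all quadruples $(S,T,P,Q)$ in which $S\uplus T=V$, $P$ is a Hamiltonian path on $D|_S$, and $Q$ is a Hamiltonian path on $\overline{D}|_T$. Since the weight depends only on the parity of $|T|$, it suffices to produce an involution $\Phi$ on these quadruples that always changes $|S|$ and $|T|$ by exactly one; such a map pairs each quadruple with one of opposite sign and therefore forces the total to vanish.

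The involution acts at the seam between the two paths. Write $v$ for the last vertex of $P$ (when $S\neq\emptyset$) and $w$ for the first vertex of $Q$ (when $T\neq\emptyset$); note $v\neq w$ because $S\cap T=\emptyset$, so exactly one of $(v,w)\in D$ or $(v,w)\in\overline{D}$ holds. If $(v,w)\in\overline{D}$, I delete $v$ from the end of $P$ and prepend it to $Q$ along the arrow $(v,w)\in\overline{D}$, moving $v$ from $S$ to $T$. If instead $(v,w)\in D$, I delete $w$ from the front of $Q$ and append it to the end of $P$ along the arrow $(v,w)\in D$, moving $w$ from $T$ to $S$. The two moves are manifestly inverse to one another: after sliding $v$ into $Q$, the new seam arrow is the arrow of $P$ that formerly entered $v$, which lies in $D$ and hence triggers the reverse move, and symmetrically in the other direction.

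The main point requiring care is the boundary, where one of $v,w$ fails to exist. When $S=\emptyset$ there is no $v$, so I simply move $w$ to create the one-vertex path $P=(w)$; when $T=\emptyset$ there is no $w$, and I move $v$ to create $Q=(v)$. I would then check that these degenerate moves mesh with the generic rule, i.e.\ that applying $\Phi$ to a quadruple with $S=\{w\}$ and $P=(w)$ recovers the $S=\emptyset$ configuration, and dually for $T$. Verifying that $\Phi$ is a genuine involution thus reduces to a short finite case analysis (both endpoints present; $|S|=1$; $|T|=1$; $S=\emptyset$; $T=\emptyset$), each case settled by the observation that the arrow reintroduced at the seam always points in the direction that reverses the previous move. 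Since $\Phi$ changes the parity of $|T|$ in every case, it has no fixed points, and \eqref{eq:pathsum} follows. This is exactly the combinatorial content of the Carlitz--Scoville--Vaughan identity cited as \cite[Theorem 7.3]{MR0432472}, specialized to the present setting.
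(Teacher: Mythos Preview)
Your proof is correct and is essentially the same sign-reversing involution used in the paper: both move a single vertex across the ``seam'' between $P$ and $Q$ according to whether the arrow from the last vertex of $P$ to the first vertex of $Q$ lies in $D$ or in $\overline{D}$, with the same treatment of the degenerate cases $S=\emptyset$ and $T=\emptyset$. Your verification that the map is an involution is somewhat more explicit than the paper's, but the argument is identical in substance.
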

\begin{proof}
Our proof is essentially the one described by Carlitz, Scoville, and Vaughan. Let $H$ be the set of pairs of disjoint paths $(E,E^\prime)$, the first on $D|_S$ and the second on $\overline{D}|_T$, which together cover every vertex. Define a map $i:H\rightarrow H$ as follows.  Let $(E,E^\prime)\in H$. Assume that neither path is empty. Let $v$ be the last vertex of $E$ and $w$ the first vertex of $E^\prime$. Let $e$ be the arrow from $v$ to $w$. If $e\in D$, then let $i(E,E^\prime)$ be the pair of paths obtained by removing $w$ from $E^\prime$ and making it the last vertex of $E$. If $e\notin D$, then let $i(E,E^\prime)$ be the pair of paths obtained by removing $v$ from $E$ and making it the first vertex of $E^\prime$. In the case where $E$ is empty, let $i(E,E^\prime)$ be obtained by removing the first vertex of $E^\prime$ and making it a one-vertex path. If $E^\prime$ is empty, remove the last vertex of $E$.

The map $i$ is clearly a sign-reversing involution, pairing each element of $H$ contributing a positive sign in \eqref{eq:pathsum} with an element contributing a negative sign. Hence the sum is zero.
\end{proof}

Proposition \ref{prop:pathsum} is best understood in terms of set maps. A \emph{set map} is a function with domain the set of finite subsets of a fixed set $V$. For a set map $h$, we use the notation $h_S$ to denote the image of $S\stackrel{,}{\subseteq}V$. When the range is a ring, multiplication of set maps is defined by
\[
(h\cdot g)_U=\sum_{S\uplus T=U}h_S g_T.
\]
So Proposition \ref{prop:pathsum} states that the set maps $h_S\stackrel{,}{=}\pi_{D|_S}$ and $g_S\stackrel{,}{=}(-1)^{|S|}\pi_{\overline{D}|_S}$ are inverse under set map multiplication. See \cite{MR1861053}, \cite{MR1928099}, and \cite{smuccp} for more on set maps in enumerative graph theory.

The following result is due to Lass \cite[Theorem 6.2]{MR1928099}.

\begin{prop}[Lass]\label{prop:mrec}Let $h$ be a set map with multiplicative inverse $h^{-1}$. Then
\[
\omega\left(\sum_{\sigma\vdash S}\tilde{m}_{t(\sigma)}\prod_{T\in\sigma}h_T\right)=
(-1)^{|S|}\sum_{\sigma\vdash S}\tilde{m}_{t(\sigma)}\prod_{T\in\sigma}h^{-1}_T.
\]
\end{prop}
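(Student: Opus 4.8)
The plan is to reduce the statement to the action of $\omega$ on power sums by re-expressing the augmented-monomial sum in the power-sum basis. For a set map $g$ write $\Phi_g(S):=\sum_{\sigma\vdash S}\tilde{m}_{t(\sigma)}\prod_{T\in\sigma}g_T$ and $\Psi_g(S):=\sum_{\sigma\vdash S}p_{t(\sigma)}\prod_{T\in\sigma}g_T$, and let $(\exp g)_U:=\sum_{\rho\vdash U}\prod_{T\in\rho}g_T$ denote the convolution exponential (the sum being over set partitions of $U$ into nonempty blocks, so that $(\exp g)_\emptyset=1$). I will assume throughout the normalization $h_\emptyset=1$; this is in fact forced by the identity, since for a one-element $S$ it already requires $h^{-1}_{\{v\}}=-h_{\{v\}}$, which follows from the inverse relation only when $h_\emptyset=1$, and it holds in the intended application $h_S=\pi_{D|_S}$.

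First I would establish the identity $\Psi_g=\Phi_{\exp g}$. This is precisely the associativity computation of \eqref{eq:zasso}: substituting the fundamental formula $p_{t(\sigma)}=\sum_{\gamma\geq\sigma}\tilde{m}_{t(\gamma)}$ and interchanging the order of summation gives $\Psi_g(S)=\sum_{\gamma\vdash S}\tilde{m}_{t(\gamma)}\sum_{\sigma\leq\gamma}\prod_{T\in\sigma}g_T$, and the inner sum factors over the blocks of $\gamma$ as $\prod_{U\in\gamma}(\exp g)_U$, which is exactly $\Phi_{\exp g}(S)$.

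Second, $\omega$ acts transparently on $\Psi_g$. Since $\omega(p_{t(\sigma)})=(-1)^{|S|-|\sigma|}p_{t(\sigma)}$ for $\sigma\vdash S$ (with $|\sigma|$ the number of blocks), and $(-1)^{-|\sigma|}=\prod_{T\in\sigma}(-1)$, the sign distributes over the blocks, yielding $\omega(\Psi_g(S))=(-1)^{|S|}\Psi_{-g}(S)$, where $(-g)_T=-g_T$.

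Finally I combine these facts through the convolution logarithm. Because $h_\emptyset=1$ there is a unique set map $g$ with $\exp g=h$, and since the set-map algebra under the product $\cdot$ is commutative and associative, $\exp$ is a homomorphism from $(\{g:g_\emptyset=0\},+)$ to the multiplicative group of set maps with value $1$ at $\emptyset$; in particular $\exp(-g)=(\exp g)^{-1}=h^{-1}$. The homomorphism property is itself a short combinatorial identity: a set partition of $U$ with a two-coloring of its blocks corresponds to a pair of partitions of complementary subsets, giving $(\exp g_1)\cdot(\exp g_2)=\exp(g_1+g_2)$. Chaining the three steps, $\omega(\Phi_h)=\omega(\Psi_g)=(-1)^{|S|}\Psi_{-g}=(-1)^{|S|}\Phi_{\exp(-g)}=(-1)^{|S|}\Phi_{h^{-1}}$, which is the claim. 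The main obstacle is this last step: justifying the existence of $g=\log h$ and the identity $\exp(-g)=h^{-1}$ cleanly, i.e. verifying that $\exp$ is a group homomorphism on the convolution algebra. The sign bookkeeping in the second step and the direction of refinement in the fundamental formula are the remaining points that demand care.
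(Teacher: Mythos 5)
Your proof is correct. Note that the paper itself gives no proof of Proposition \ref{prop:mrec}: it is quoted from Lass, so there is no in-text argument to compare against. Your route --- rewriting $\sum_{\sigma}\tilde{m}_{t(\sigma)}\prod_{T\in\sigma}h_T$ in the power-sum basis via the convolution exponential, letting $\omega$ act on $p_{t(\sigma)}$ by the sign $(-1)^{|S|-|\sigma|}$, and absorbing the factor $(-1)^{|\sigma|}$ block-by-block to reduce everything to $\exp(-g)=(\exp g)^{-1}=h^{-1}$ --- is essentially the exponential/logarithm formalism on which Lass's paper is built, and it meshes cleanly with machinery this paper already sets up: your identity $\Psi_g=\Phi_{\exp g}$ is exactly the computation \eqref{eq:zasso} explained in Note \ref{note:spm}, and the homomorphism $(\exp g_1)\cdot(\exp g_2)=\exp(g_1+g_2)$ is the standard two-coloring of blocks, which you verify. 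All three steps check, including the sign bookkeeping and the direction of refinement in $p_{t(\sigma)}=\sum_{\gamma\geq\sigma}\tilde{m}_{t(\gamma)}$. One small quibble: the $|S|=1$ case of the identity forces only $h_\emptyset^2=1$, not $h_\emptyset=1$; but $h_\emptyset=1$ is the intended convention for set maps here (and holds for $h_S=\pi_{D|_S}$), so imposing it as a standing normalization, as you do, is the right move and costs no generality in the application.
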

Notice that \eqref{eq:prec} follows from \eqref{eq:pth} and Propositions \ref{prop:pathsum} and \ref{prop:mrec}.




\bibliography{refs}

\end{document}